\title{\textbf{Modular Model Categories}}
\author{Renaud Gauthier \footnote{rg.mathematics@gmail.com} \\ \\}
\theoremstyle{definition}
\newtheorem{ShSqu}{Proposition}[subsection]
\newtheorem{dir2}{Lemma}[subsection]
\newtheorem{et}{Lemma}[subsection]
\newtheorem*{acknowledgments}{Acknowledgments}
\newtheorem{NisCov}{Lemma}[subsection]
\newcommand{\beq}{\begin{equation}}
\newcommand{\eeq}{\end{equation}}
\newcommand{\rarr}{\rightarrow}
\newcommand{\larr}{\leftarrow}
\newcommand{\eset}{\emptyset}
\newcommand{\xrarr}{\xrightarrow}
\newcommand{\cC}{\mathcal{C}}
\newcommand{\cCop}{\cC^{\op}}
\newcommand{\cD}{\mathcal{D}}
\newcommand{\cF}{\mathcal{F}}
\newcommand{\cG}{\mathcal{G}}
\newcommand{\cI}{\mathcal{I}}
\newcommand{\cM}{\mathcal{M}}
\newcommand{\cO}{\mathcal{O}}
\newcommand{\cP}{\mathcal{P}}
\newcommand{\cU}{\mathcal{U}}
\newcommand{\cX}{\mathcal{X}}
\newcommand{\cY}{\mathcal{Y}}
\newcommand{\cZ}{\mathcal{Z}}
\newcommand{\gm}{\mathfrak{m}}
\newcommand{\bA}{\mathbb{A}}
\newcommand{\bE}{\mathbb{E}}
\newcommand{\bZ}{\mathbb{Z}}
\newcommand{\Cat}{\text{Cat}}
\newcommand{\deem}{\text{dim}}
\newcommand{\Fun}{\text{Fun}}
\newcommand{\Hom}{\text{Hom}}
\newcommand{\map}{\text{map}}
\newcommand{\op}{\text{op}}
\newcommand{\Set}{\text{Set}}
\newcommand{\SetD}{\Set_{\Delta}}
\newcommand{\SetDstar}{\Set_{\Delta, *}}
\newcommand{\Spec}{\text{Spec}}
\newcommand{\PreSh}{\text{PreSh}}
\newcommand{\Sh}{\text{Sh}}
\newcommand{\bExt}{\bE \text{xt}}
\newcommand{\bExtdot}{\bExt^{\ctrdot}}
\newcommand{\Bdot}{B_{\ctrdot}}
\newcommand{\ctrdot}{\centerdot}
\newcommand{\Catfes}{\Cat^{full}_{ess.surj.}}
\newcommand{\Cdot}{C_{\ctrdot}}
\newcommand{\cOX}{\cO_X}
\newcommand{\cOY}{\cO_Y}
\newcommand{\cOXi}{\cO_{X_i}}
\newcommand{\cFdot}{\cF_{\ctrdot}}
\newcommand{\cIji}{\cI_{ji}}
\newcommand{\cIdot}{\cI^{\ctrdot}}
\newcommand{\cCdot}{\cC_{\ctrdot}}
\newcommand{\Dji}{\Delta_{ji}}
\newcommand{\Ext}{\text{Ext}}
\newcommand{\Extdot}{\Ext^{\ctrdot}}
\newcommand{\Funfes}{\Fun^{full}_{ess.surj.}}
\newcommand{\fa}{f_{\alpha}}
\newcommand{\faji}{f_{\alpha ji}}
\newcommand{\gammalk}{\gamma_{lk}}
\newcommand{\xH}{\text{H}}
\newcommand{\HH}{\text{HH}}
\newcommand{\HHdot}{\HH^{\ctrdot}}
\newcommand{\Hdot}{\xH^{\ctrdot}}
\newcommand{\Iaj}{I_{\alpha j}}
\newcommand{\Ia}{I_{\alpha}}
\newcommand{\Jai}{J_{\alpha i}}
\newcommand{\muaij}{\mu_{\alpha i_j}}
\newcommand{\muai}{\mu_{\alpha i}}
\newcommand{\muaj}{\mu_{\alpha j}}
\newcommand{\mijk}{m_{i_{j_k}}}
\newcommand{\mij}{m_{i_j}}
\newcommand{\njk}{n_{j_k}}
\newcommand{\Nis}{\text{Nis}}
\newcommand{\OXX}{\cO_{X \times X}}
\newcommand{\paij}{p_{\alpha ij}}
\newcommand{\pai}{p_{\alpha i}}
\newcommand{\phiji}{\phi_{ji}}
\newcommand{\psikj}{\psi_{kj}}
\newcommand{\raji}{r_{\alpha ji}}
\newcommand{\saji}{\sigma_{\alpha ji}}
\newcommand{\Smk}{\text{Sm}/k}
\newcommand{\SmCor}{\text{SmCor}}
\newcommand{\sumI}{\sum_{i \in I}}
\newcommand{\sumJ}{\sum_{j \in J}}
\newcommand{\uFi}{\underline{\cF}_i}
\newcommand{\uU}{\underline{U}}
\newcommand{\uF}{\underline{\cF}}
\newcommand{\uX}{\underline{X}}
\newcommand{\Uai}{U_{\alpha i}}
\newcommand{\uai}{u_{\alpha i}}
\newcommand{\Uaj}{U_{\alpha j}}
\newcommand{\ua}{u_{\alpha}}
\newcommand{\Ua}{U_{\alpha}}
\newcommand{\Ub}{U_{\beta}}
\newcommand{\ZSmk}{\mathbb{Z}\text{Sm}/k}
\begin{document}
\maketitle
\begin{abstract}

	To any model category $\cM$, we associate a modular model category, a functor of point $\cM[-]: \Cat \rarr \Cat$, that associates to any small category $\cC$ a functor category $\cM[\cC] = \Fun^{full}_{ess. surj.}(\cC, \cM)$, providing parametrizations of a same model category $\cM$ by different small categories. We are in particular interested in using schemes as parameters. We consider $\cC = \ZSmk$ the category of linear combinations of smooth separated schemes of finite type over $\Spec(k)$, $k$ a field, referred to as $\bZ$-schemes. We contrast this with using the $\bA^1$-homotopy category of $\bZ$-schemes as a parametrizing category.
\end{abstract}

\newpage

\section{Introduction}
Defining a model structure on a category $\cM$ necessitates the introduction of a class of weak equivalences, among other things. From there one may consider different types of weak equivalence between fixed objects, each one being defined relative to some algebraic invariant. The idea here is to introduce some variability in the notion of weak equivalence we use. If the model structure on $\cM$ is fixed however, so are the equivalences. The variety in weak equivalences can nevertheless be implemented with parametrizations. From there, one is led to considering parametrizations of $\cM$ by small categories $\cC$, provided by $\Fun^{full}_{ess.surj.}(\cC, \cM) \in \cP(\cM)$, $\cP(\cM)$ the category of parametrizations of $\cM$. If $\Catfes$ is the category of small categories and full, essentially surjective functors, we have a functor of points $\cM[-]: \Catfes \rarr \cP(\cM)$, that associates to any small category $\cC$ the functor category $\Hom_{\Catfes}(\cC, \cM) = \Funfes(\cC, \cM) = \cM[\cC]$, which we refer to as the modular model category associated to $\cM$. Note that this inscribes itself within the theory of parametrized homotopy theory (\cite{MS}). We are in particular interested in using schemes as parameters. Simple categories were already parametrized by schemes in \cite{Ma} for representationt theoretic purposes. Most recently, the topos $\cM^{(\cX,I)}$ was considered in $\cite{Ka}$, where $\cM$ is a left proper combinatorial simplicial model category, $\cX$ a site with interval $I$, with aim the construction of algebraic cobordism for motivic stacks, which is achieved by letting $\chi=\Smk$, $I = \bA^1$ and $\cM = \SetDstar$. In the present work we use linear combinations of schemes as parameters, which we refer to as $\bZ$-schemes. This is implemented by first generalizing schemes to finite correspondences (\cite{FSV}), and then to $\bZ$-schemes. Aside from providing a generalization, note that morphisms between schemes in $\SmCor(k)$ are finite correspondences, linear combinations of schemes. Thus by taking $\bZ$-schemes as objects, we place ourselves at any level in the $\infty$-category $\SmCor(k)$, an obvious generalization being $\ZSmk$, the $\infty$-category of $\bZ$-schemes. Using Yoneda we regard those as presheaves. We then consider full, essentially surjective functors from $\Sh(\ZSmk, \Nis)$ into a given model category $\cM$. This is one object of $\cP(\cM)$, the parametrization of $\cM$ by $\bZ$-schemes. Next, we consider categories $\cC$ endowed with an equivalence relation as an alternative to definning categories with an interval object. Define two morphisms $\phi: X \rarr Y$ and $\phi': X' \rarr Y'$ in $\cC$ to be equivalent if $X \sim X'$ and $Y \sim Y'$ in $\cC$. Let $\cC/\!\!\sim$ be the category of equivalence classes of objects of $\cC$ with equivalence classes of morphisms between them. For $F: \cC \rarr \cM$ a functor, define $FX \sim FX'$ if $X \sim X'$ in $\cC$, and $F\phi \sim F\phi'$ if $\phi \sim \phi'$. Let $\cM /\!\!\sim$ be the category $\cM$ modulo those equivalence relations. We have an induced functor $[F]: \cC/\!\!\sim \,\,\rarr \cM/\!\!\sim$. We apply this formalism to $\cC = \Sh(\ZSmk, \Nis)$ in particular. For that purpose, we consider a notion of equivalence relation on $\ZSmk$, and one in particular we use is based on the Hochschild cohomology of schemes, generalized to $\bZ$-schemes. Probably the easiest way to define it is by $HH^{\ctrdot}(X) = \Ext^{\ctrdot}_{\cO_{X \times X}}(\Delta_* \cOX, \Delta_* \cOX)$, where if $X = \sum m_i [X_i]$ and $Y = \sum n_j[Y_j]$ are elements of $\ZSmk$, we say $X \sim Y$ if and only if the indexing sets are the same, $m_i = n_i$ for all $i$, and $HH^{\ctrdot}(X_i) \cong HH^{\ctrdot}(Y_i)$ for all $i$, resulting in $HH^{\ctrdot}(X) \cong HH^{\ctrdot}(Y)$ since $HH^{\ctrdot}(X) = \otimes m_i HH^{\ctrdot}(X_i)$. Of independent interest, we also define a notion of depth in the topology on $\ZSmk$ for the sake of precision by defining the general notion of powered topology, which goes as follows: suppose we have two categories of objects of a same type,  
$\cX_N = \{ \cX_{N+1} \}$ and $\cY_N = \{ \cY_{N+1} \}$  themselves objects of a category $\cC_{N-1}$ with a functor $F_N: \cY_N \rarr \cX_N$. Suppose we have a Grothendieck topology $\tau_{N-1}$ on $\cC_{N-1}$ with $F_N$ a covering map, element of a covering family in $K(\cX_N)$. Define a loose pre-topology $\tau_N$ on $\cX_N$ by defining loose covering families in $K(\cX_N)$ to be families of morphisms $\cY_{N+1} \rarr \cX_{N+1}$, satisfying the same defining properties as traditional covering families for traditional Grothendieck topologies. We obtain a layered morphism:
\beq
\xymatrix{
	\cY_N \ar[r] &\cX_N \\
	\cY_{N+1} \ar@{.>}[u] \ar[r] & \cX_{N+1} \ar@{.>}[u]
} \nonumber
\eeq
If the top and bottom maps are (loose) covering maps, then such a square would define a covering map in $K(\cX_N, \cX_{N+1})$, thereby defining a notion of powered topology $\tau_N \circ \tau_{N-1}$ on $\cC_{N-1}$. This formalism has the obvious advantage of giving level-wise degrees of precision.\\

In Section 2, we introduce modular model categories. We consider parametrizations of model categories by schemes, so we introduce $\bZ$-schemes in Section 3. In Sections 4 and 5, we discuss sheaves on individual $\bZ$-schemes, and on the site $(\ZSmk, \Nis)$. In Section 6, we introduce layered morphisms and powered topologies, and in Section 7, we add equivalence relations to the picture. In Section 8 we take stock and define $\ZSmk$-parametrizations of model categories, $\cM[\Sh(\ZSmk, \Nis)]$, which we compare with using the $\bA^1$-homotopy theory of $\bZ$-schemes for base category of our parametrizations, thus we contrast using $\cM[\Sh(\ZSmk, [\Nis])]$ with using $\cM[\Sh_{\bA^1}(\ZSmk, \Nis)]$.

\begin{acknowledgments}
The author would like to thank the organizers of the 2019 Exchange of Mathematical Ideas conference, during which part of this work was presented.
\end{acknowledgments}

\section{Modular Model Categories}
Suppose one has a full, essentially surjective functor $F: \cC \rarr \cD$ from one category to another. Consider a morphism $u: a \rarr b$ in $\cD$. Then one can write $u$ as $F(\phi): FX \rarr FY$ for some $X$ and $Y$ in $\cC$ such that $FX = a$ and $FY = b$, with $\phi:X \rarr Y$. Suppose now we have another full, essentially surjective functor $G: \cC \rarr \cD$. Then that same morphism $u$ can be represented as $G(\psi): GW \rarr GZ$, where $W$ and $Z$ are objects of $\cC$ such that $GW = a$ and $GZ = b$, with $\psi: W \rarr Z$ some morphism. Thus we have two different representations of a same morphism of $\cD$ by morphisms of $\cC$. Hence the category $\Funfes(\cC, \cD)$ gives all the parametrizations of $\cD$ relative to $\cC$. Define $\cP(\cD)$ to be the set of elements of the form $\Funfes(\cC, \cD)$ for some small category $\cC$. We apply this formalism to model categories, so we let $\cD = \cM$ be a model category in what follows. Let $\Catfes$ the category of small categories and full, essentially surjective morphisms between them. $G: \cC \rarr \cD$ in $\Catfes$ induces $G^*: \cM^{\cD} \rarr \cM^{\cC}, F \mapsto F \circ G$. This defines a functor between different elements of $\cP(\cM)$ making it into a category, which we call the \textbf{category of parametrizations of $\cM$}, and we have a functor:
\begin{align}
	\cM[-]: &\Catfes \rarr \cP(\cM) \nonumber \\
	&\cC \mapsto \Funfes(\cC, \cM) = \cM[\cC] \nonumber \\
	&(G: \cC \rarr \cD) \mapsto (G^*:\cM[\cD] \rarr \cM[\cC]) \nonumber
\end{align}
which we call a \textbf{modular model category}, namely the one associated with $\cM$.\\

Going back to the general case, consider a functor $F: \cC \rarr \cD$ from one small category to another, and suppose $\cC$ is endowed with an equivalence relation. Define two morphisms $\phi: X \rarr Y$ and $\phi':X' \rarr Y'$ of $\cC$ to be equivalent, $\phi \sim \phi'$, if $X \sim X'$ and $Y \sim Y'$. Define $FX \sim FX'$ in $\cD$, if $X \sim X'$ in $\cC$. It follows that if $X \sim X'$ and $Y \sim Y'$, then on the one hand $\phi \sim \phi'$, and on the other hand $FX \sim FX'$ and $FY \sim FY'$, from which $F\phi \sim F\phi'$ by definition. $F: \cC \rarr \cD$ being given, having an equivalence relation on $\cC$ induces:
\begin{align}
	[F]: & \cC /\!\!\sim \,\,\rarr \cD /\!\!\sim \nonumber \\
	& [X] \mapsto [FX] \nonumber \\
	& ([\phi]: [X] \rarr [Y]) \mapsto ([F \phi]: [FX] \rarr [FY]) \nonumber
\end{align}
where we say we have a morphism $[\phi]: [A] \rarr [B]$ if we can exhibit a representative $\phi: A \rarr B$, and we say $[A] \rarr [B]$ is of type $\Lambda$ if there is a representative morphism of type $\Lambda$. Observe that if $\cD = \cM$ is a model category, and $\Lambda$ = C, W or F, meaning a cofibration, a weak equivalence or a fibration, then a same map $[A] \rarr [B]$ in $\cM /\!\!\sim$ can be of different types simultaneously, so we lose in detail. By definition it is clear that $\cM /\!\!\sim$ is still a model category, albeit a very weak one.\\

\section{$\mathbb{Z}$-schemes}
In a first time, we will apply the above formalism to $\cC = \Smk$, the category of smooth separated schemes of finite type over $\Spec(k)$, where $k$ is a field. Following \cite{FSV}, we consider an extension of this category using finite correspondences as morphisms of schemes, giving rise to a category SmCor(k), whose objects are smooth schemes of finite type over $k$, and whose morphisms are finite correspondences, essentially linear combinations of integral schemes (see \cite{FSV} for a complete definition).\\

There is a functor $[ \,] : \Smk \rarr \SmCor(k)$, that associates to any scheme $X$ an object $[X]$ of SmCor(k), and to any morphism $f: X \rarr Y$ in $\Smk$ its graph $\Gamma_f \subset X \times Y$. For morphisms of schemes $X \xrarr{f} Y$ and $Y \xrarr{g} Z$ whose composition is defined, the corresponding composition in SmCor(k) reads $[X] \xrarr{\Gamma_f} [Y] \xrarr{\Gamma_g} [Z]$, where the composition $\Gamma_g \circ \Gamma_f$ is defined as in \cite{FSV}. We generalize this construction by defining $\ZSmk$, the category whose objects are linear combinations of elements of $\Smk$(referred to as \textbf{$\bZ$-schemes}), and whose morphisms are appropriately chosen linear combinations of finite correspondences in a sense that we make precise presently.\\

Objects of $\ZSmk$ are linear combinations of smooth separated schemes of finite type over $\Spec(k)$, they are of the form $\sum m_i [X_i]$. In other terms $\ZSmk$ is a free abelian group on $\Smk$. We have an embedding $\iota(X) = 1 \cdot [X]$ from $\Smk$ to $\ZSmk$. A map $\phi$ from $\Smk$ to $\ZSmk$ naturally extends to a map $\Phi: \ZSmk \rarr \ZSmk$ as follows: $\Phi(\sum m_i [X_i]) = \sum m_i \phi([X_i])$. Indeed, there is a unique map $\phi_*$ that makes the following diagram commutative for $m \in \bZ$:
\beq
\xymatrix{
	\Smk \ar[d]_{\phi} \ar[r]^{m\iota} &\ZSmk \ar[d]^{\phi_*} \\
	\Smk \ar[r]_{m\iota} & \ZSmk 
} \nonumber
\eeq
that is $\phi_* \circ m \iota = m \iota \circ \phi$, that is $\phi_*(m[X]) = m \phi [X]$. This can formally be presented as saying $\phi_* = m \phi$, and this is the notation we will adopt. Of particular interest, observe that if we are looking at a morphism of schemes $X \xrarr{\phi} Y$, then this means $\phi_*: m[X] \rarr m[Y]$. Indeed, if we start from a morphism $\phi: [X] \rarr [Y]$, then $m\phi: [X] \rarr m[Y]$, so that $\phi_*  : m [X] \rarr m[Y]$. Henceforth, we will drop the $*$ notation. We also consider morphisms of the form $\sum m_i [X_i] \rarr \sum n_j[Y_j]$. Since each $[X_i]$ may map to different $[Y_j]$'s, $m_i$ will split as:
\beq
m_i = m_{i_{j_1}} + \cdots + m_{i_{j_{|i|}}} = \sum_{j \in J_i} \mij \nonumber
\eeq
where $|i|$ is the number of $[Y_j]$'s $[X_i]$ is mapping to, and $J_i$ is the subset of those indices $j$ of $J$ with a morphism $[X_i] \rarr [Y_j]$. It also means that:
\beq
n_j = \sum_{i \in I_j} \mij \nonumber
\eeq
where $I_j$ denotes the set of indices $i$ for which we have a morphism $[X_i] \rarr [Y_j]$. If $I_j$ is a singleton, then $n_j$ is simply not decomposed. Thus a morphism $\phi : \sum_{i \in I} m_i [X_i] \rarr \sum_{j \in J} n_j [Y_j]$ will decompose as follows:
\beq
\phi: \sum_{\substack{j \in J \\ i \in I_j}} \mij[X_i] \rarr \sum_{\substack{ j \in J \\ i \in I_j}} m_{i_j} [Y_j] \nonumber
\eeq
If we denote by $\phi_{ji}$ the restriction of $\phi$ to a map $ [X_i] \rarr [Y_j]$, then it is clear that we have:
\begin{align}
	\phi &= \sum_{\substack{ i \in I \\ j \in J_i}} \mij \phi_{ji} \nonumber \\
	&= \sum_{\substack{ j \in J \\ i \in I_j}} \mij \phi_{ji} \nonumber
\end{align}

Observe that each morphism $\phi_{ji}: [X_i] \rarr [Y_j]$ is in SmCor(k), i.e. it is really a finite correspondence. This makes $\phi$ a linear combination of finite correspondences, hence an element of $\ZSmk$ itself. \\

Consider another morphism:
\beq
\psi: \sum_{j \in J} n_j [Y_j] \rarr  \sum_{k \in K} p_k [Z_k] \nonumber
\eeq
with $\sum_{j \in J_k} \njk = p_k$. Here $\psi = \sum \njk \psi_{kj}$ with $\psi_{kj}: [Y_j] \rarr [Z_k]$. We will now define $\psi \circ \phi$. First consider:
\beq
m[X] \xrarr{m\phi} m[Y] \xrarr{m\psi} m[Z] \nonumber
\eeq
with $\phi: X \rarr Y$ and $\psi: Y \rarr Z$, then $m\psi \circ m \phi = m( \psi \circ \phi)$. Now given $k \in K$, if $j \in J_k$ we have a morphism:
\beq
\njk \psikj: \njk [Y_j] \rarr \njk [Z_k] \nonumber 
\eeq
If in addition $i \in I_j$, then we have:
\beq
\mijk [X_i] \xrarr{\mijk \phiji} \mijk [Y_j] \xrarr{\mijk \psikj} \mijk [Z_k] \nonumber
\eeq
where $\mijk$ is defined as follows. Write $(ijk)$ for $i_{j_k}$ for simplicity. We have $p_k = \sum_{j \in J_k} \njk$, and $\njk = \sum_{i \in I_j} m_{(ijk)}$. Then we can define our composition:
\beq
\sum m_i [X_i] \xrarr{\phi} \sum n_j [Y_j] \xrarr{\psi} \sum p_k [Z_k] \nonumber
\eeq
as:
\beq
\psi \circ \phi = \sum_{\substack{k \in K \\ j \in J_k}} \sum_{i \in I_j} m_{(ijk)} \psikj \circ \phiji \nonumber
\eeq

Thus defined, composition is clearly associative. In so doing, it helps to regard $\mijk$ as the coefficient of $[X_i]$ in the decomposition of $X$ for the composition $[X_i] \rarr [Y_j] \rarr [Z_k]$, hence $m_{(ijkl)}$ is the coefficient of $X_i$ in $[X_i] \rarr [Y_j] \rarr [Z_k] \rarr [W_l]$. Indeed, consider the following composition:
\beq
\sum m_i [X_i] \xrarr{\phi} \sum n_j [Y_j] \xrarr{\psi} \sum p_k [Z_k] \xrarr{\gamma} \sum q_l [W_l] \nonumber
\eeq
On the one hand:
\begin{align}
	\gamma \circ (\psi \circ \phi) &= \gamma \circ \sum m_{(ijk)} \psikj \circ \phiji \nonumber \\
	&= \sum_{\substack{l \in L \\ k \in K_l}} \sum_{\substack{j \in J_k \\ i \in I_j}} m_{(ijkl)} \gammalk \circ ( \psikj \circ \phiji) \nonumber
\end{align}
where:
\beq
q_l = \sum_{k \in K_l} \sum_{j \in J_k} \sum_{i \in I_j} m_{(ijkl)} \nonumber
\eeq
On the other hand:
\begin{align}
	(\gamma \circ \psi) \circ \phi &= (\sum_{\substack{l \in L \\ k \in K_l}} \sum_{j \in J_k} n_{(jkl)} \gammalk \circ \psikj) \circ \phi \nonumber \\
	&=(\sum_{\substack{l \in L \\ k \in K_l}} \sum_{\substack{j \in J_k\\ i \in I_j}} m_{(ijkl)} (\gammalk \circ \psikj)) \circ \sum m_{(ijkl)} \phiji \nonumber \\
	&= \sum_{\substack{l \in L \\ k \in K_l}} \sum_{\substack{j \in J_k\\ i \in I_j}} m_{(ijkl)} (\gammalk \circ \psikj) \circ \phiji \nonumber 
\end{align}
where we have $n_{(jkl)} = \sum_{i \in I_j} m_{(ijkl)}$. However, we clearly have $(\gammalk \circ \psikj) \circ \phiji = \gammalk \circ (\psikj \circ \phiji)$, hence associativity. About identity morphisms, if $\phi = \sum_{ j \in J, i \in I_j} \mij \phiji: \sum \mij [X_i] \rarr \sum \mij [Y_j]$, then a right inverse is provided by $\sum \mij id_{[Y_j]}$, and a left inverse by $\sum \mij id_{[X_i]}$.

\section{Sheaves on $\mathbb{Z}$-schemes}
The motivation for considering presheaves of sets on $\ZSmk$ is that the coproduct of schemes in $\Smk$ is not always well-defined, hence we have the same problem in $\ZSmk$ as well. A convenient way to fix this problem is to formally add colimits in $\ZSmk$ by considering presheaves of sets on $\ZSmk$, as already done in \cite{V} for SmCor($k$). We have a Yoneda embedding:
\begin{align}
	h: \ZSmk &\rarr \Set^{\ZSmk^{\op}} \nonumber \\
	X & \mapsto h_X = \Hom_{\ZSmk}(-,X) \nonumber
\end{align}

Now in $\PreSh(\ZSmk)$, for the sake of doing homotopy, we have well-defined pushouts.\\

Now another problem surfaces. As pointed out in \cite{V}, if $X = U \cup V$ is a covering of a scheme $X$ by two Zariski open subsets, the following diagram is a pushout in $\Smk$, hence in $\ZSmk$ as well:
\beq
\xymatrix{
	U \cap V \ar[d] \ar[r] &U \ar[d] \\
	V \ar[r] &X
} \nonumber
\eeq
but the corresponding square of representable presheaves:
\beq
\xymatrix{
	h_{U\cap V} \ar[d] \ar[r] &h_U \ar[d] \\
        h_V \ar[r] &h_X
}\nonumber
\eeq
is not necessarily a pushout in $\PreSh(\Smk)$, hence not one in $\PreSh(\ZSmk)$ either. This can be remedied by considering sheaves. Recall that a presheaf of sets $F: \cCop \rarr \Set$ is a sheaf in a topology $\tau$ on $\cC$ if for any covering family $\{f_{\alpha}: U_{\alpha} \rarr X\}$ in this topology, the following sequence is exact:
\beq
F(X) \rarr \prod_{\alpha} F(U_{\alpha}) \rightrightarrows \prod_{\alpha, \beta}F(U_{\alpha} \times_X U_{\beta}) \nonumber
\eeq
Thus it is clear that we will have to introduce a topology on $\ZSmk$. We will prove that sheaves, which are contravariant functors from $\ZSmk$ to $\Set$ map certain pushout squares to cartesian squares. Those pushout squares are referred to as elementary distinguished squares in \cite{V}, \cite{FSV}, \cite{ORV}, \cite{VM}, and they are defined in terms of etale morphisms of $\bZ$-schemes. Those require the notion of sheaves on $\mathbb{Z}$-schemes.\\

\subsection{Sheaves on $\mathbb{Z}$-schemes}
For $X = \sum m_i [X_i]$ an object of $\ZSmk$, a sheaf $\cF$ on $X$ decomposes as $\cF = \times_i \cF_i$ where each $\cF_i$ is a sheaf on $m_i[X_i]$. However for sheaves of abelian groups, addition is well-defined for a product of sheaves if we consider their tensor product instead, so $\cF  = \otimes_i \cF_i$. Now it suffices to define sheaves at objects of the form $m[X]$ for $m \in \mathbb{Z}$ and $X \in \Smk$.\\

For $X \in \ZSmk$ of the form $X = m [X]$ presently we adopt the notation $\uX$ for $[X]$ in such a manner that $ X = m \uX$, and in the same fashion, if $\uF$ is a sheaf of abelian groups on $\Smk$, or even a presheaf to be more general, then one can define $\cF = m \uF$ as a presheaf on $\ZSmk$ according to the following commutative diagram:
\beq
\xymatrix{
	\uU \ar[d]_{ \times m} \ar[r]^{\uF} & \uF(\uU) \ar[d]^{\times m} \\
U = m \uU \ar@{.>}[r]^-{\cF = m \uF} & \cF(U) = \cF(m \uU) =  m\uF(\uU) 
} \nonumber
\eeq
We now define sheaves of $\cO_X$-modules on $X = \sum m_i [X_i] \in \ZSmk$. Let $\cF$ be a sheaf of abelian groups on $X$, $\cF = \otimes \cF_i|_{m_iX_i} = \otimes_i m_i \uFi|_{X_i}$, with $\uFi$ sheaf on $X_i$, $\cO_X = \otimes m_i \cO_{X_i}$. A sheaf of $\cO_X$-modules is a sheaf $\cF$ on $X$ such that for each open set $U = \sum_{i \in I} m_i [U_i]$, $U_i$ open in $X_i$, the group $\cF(U) = \otimes m_i \uFi(U_i)$ is a $\otimes m_i \cO_{X_i}(U_i)$ -module, that is $\uFi(U_i)$ is a $\cO_{X_i}(U_i)$-module for all $i \in I$, and for each inclusion of open sets $V \subset U$ in $\ZSmk$, the indexed restriction homomorphisms are compatible with the module structure.\\

\subsection{Flatness} \label{flat}
Following \cite{H} for the case of $\Smk$, that we generalize to $\ZSmk$, let $\phi: X = \sum m_i [X_i] \rarr Y = \sum n_j [Y_j]$ be a morphism in $\ZSmk$, $\cF$ a $\cO_X$-module. Let $x = \sum m_i x_i \in X$ with $x_i \in X_i$ for all $i \in I$. Since we have a decomposition $\phi  = \sum_{j \in J, i \in I_j} m_{i_j}\phi_{ji}$, with $\phi_{ji}: [X_i] \rarr [Y_j]$, it follows that one can write $x = \sum_{j \in J, i \in I_j} m_{i_j} x_i$ in such a manner that $m_{i_j} x_i$ maps to some $m_{i_j} y_{ji}$ in $Y_j$ for all $i \in I_j$, that is $\phi_{ji}(x_i) = y_{ji}$. Letting $y = \sum_{j \in J, i \in I_j} m_{i_j} y_{ji}$ in $Y$, one says $\cF$ is flat over $Y$ at $x$ if $\cF_x = \otimes_{j \in J} \otimes _{i \in I_j} m_{i_j} (\underline{\cF}_i)_{x_i}$ is a flat $\cO_y = \otimes_{j \in J} \otimes_{i \in I_j} m_{i_j} \cO_{y_{ji}}$-module, where we consider $(\uFi)_{x_i}$ a $\cO_{y_{ji}}$-module via the natural maps $\phi_{ji}^{\#}: \cO_{y_{ji}} \rarr \cO_{x_i}$. Then one says $\cF$ is flat over $Y$ if it is flat at every point of $X$ and one says $X$ is flat over $Y$ if $\cO_X$ is. Observe that if $Y = \Spec (k)$, if each $X_i$ is flat, so is $X = \sum m_i[X_i]$.\\

\subsection{Sheaf of relative differentials}
Remember from \cite{H} that the sheaf of relative differentials $\Omega_{X/Y}$ for a morphism of schemes $f: X \rarr Y$ is defined as $\Delta^*(\cI/\cI^2)$ where $\Delta: X \rarr X \times_Y X$ is the diagonal map and $\cI$ is the sheaf of ideals of $\Delta X$ on $W$, open subset of $X \times_Y X$, $\Delta X$ closed subscheme thereof. For $\phi: X = \sum m_i [X_i] \rarr Y = \sum n_j [Y_j]$, we have the usual decomposition $\phi = \sum_{j \in J, i \in I_j} \mij \phi_{ji}$, so that:
\beq
X \times_Y X = \sum_{\substack{j \in J \\ i \in I_j}} \mij [X_i] \times_{[Y_j]} [X_i] \supset W \supset \Delta(X) \nonumber
\eeq
with $\Delta(X) = \sum_{j \in J, i \in I_j} \mij \Delta_{ji} [X_i]$, where $\Delta_{ji}: [X_i] \rarr [X_i] \times_{[Y_j]} [X_i]$ is the diagonal map, in such a manner that if $\cI_{ji}$ is the sheaf of ideals of $\Delta_{ji}[X_i]$ on $ W \cap [X_i] \times_{[Y_j]} [X_i]$, then we have:
\beq
\cI = \otimes_{\substack{j \in J \\ i \in I_j}} \mij \cI_{ji} \nonumber
\eeq
so that:
\beq
\cI /\cI^2 = \otimes \mij \cI_{ji}/\cI_{ji}^2 \nonumber
\eeq
Now recall (\cite{H}) that for $f: X \rarr Y$ a morphism of schemes, for $\cG$ a sheaf on $Y$, $f^* \cG = f^{-1} \cG \otimes_{f^{-1}\cOY} \cOX$ is a sheaf of $\cOX$-modules, where $f^{-1} \cG$ is the sheaf associated to the presheaf $U \mapsto \lim_{V \supseteq f(U)} \cG(V)$. Here $\Delta^* (\cI/\cI^2) = \Delta^{-1} \cI / \cI^2 \otimes_{\Delta^{-1} \cO_{X \times_Y X}} \cOX$, where $\Delta^{-1} \cI / \cI^2$ is the sheaf associated with $U = \sum m_i [U_i] \mapsto \lim_{V \supseteq \Delta U} \cI/ \cI^2(V)$, with $\Delta U = \sum \mij \Dji [U_i] \subset \sum \mij V_{ji} = V$, and $V_{ji}$ is open in $[X_i] \times_{[Y_j]} [X_i]$. We also have:
\begin{align}
	\cI / \cI^2(V) &= \big( \otimes \mij \cIji/ \cIji^2 \big) (\sum \mij V_{ji}) \nonumber \\ 
	&= \otimes \mij \cIji / \cIji^2 (V_{ji}) \nonumber
\end{align}
Indeed recall that if $\cF = m \underline{\cF}$, $\cF(m[X]) = m \underline{\cF}[X]$. Note that once $X$ is fixed, so are the coefficients $m_i$, hence so are the $\mij$'s as well once $Y$ is fixed, so the above decomposition does not depend on $V$. It follows:
\begin{align}
	\lim_{V \supseteq \Delta U} \cI / \cI^2(V) &= \lim_{\sum \mij V_{ji} \supseteq \sum \mij \Dji [U_i]} \otimes \mij \cIji / \cIji^2(V_{ji}) \nonumber \\
	 &= \otimes \mij \lim_{V_{ji} \supseteq \Dji [U_i]} \cIji / \cIji^2 ( V_{ji}) \nonumber 
\end{align}
so that $\Delta^{-1} \cI / \cI^2 = \otimes \mij \Dji^{-1} \cIji / \cIji^2$, as well as $\Delta^{-1} \cO_{X \times_Y X} = \Delta^{-1} (\otimes \mij \cO_{[X_i] \times_{[Y_j]} [X_i]}) = \otimes \mij \Dji^{-1} \cO_{[X_i] \times_{[Y_j]} [X_i]}$, so that:
\begin{align}
	\Delta^* ( \cI / \cI^2) &= \Delta^{-1} \cI / \cI^2 \otimes_{ \Delta^{-1} \cO_{X \times_Y X}} \cOX \nonumber \\
	&= \otimes \mij \big( \Dji^{-1} \cIji / \cIji^2 \otimes_{\Dji^{-1} \cO_{[X_i] \times_{[Y_j]} [X_i]}} \cOXi \big) \nonumber \\
	&=\otimes \mij \Dji^* \cIji / \cIji^2 \nonumber \\
	&= \otimes \mij \Omega_{X_i / Y_j} \nonumber \\
	&= \otimes \Omega_{\mij [X_i] / \mij [Y_j]} = \Omega_{X/Y} \nonumber
\end{align}
where in going from the first line to the second, we used $m\underline{\cF} \otimes_{m \cO_Y} m \underline{\cG} = m \underline{\cF}\otimes_{\cO_Y} \underline{\cG}$ since $ m \underline{\cF} \otimes m \underline{\cG} = m(\underline{\cF} \otimes \underline{\cG})$ is a sheaf of $m \cO_Y$-modules.

\subsection{Etale maps}
Recall that an etale map $f: X \rarr Y$ is a smooth map of relative dimension zero, which in $\Smk$ means $f$ flat, for any irreducible components $X' \subset X$ and $Y' \subset Y$ such that $f(X') \subset Y'$, we have  $\deem(X') = \deem(Y')$, and $\deem_{k(x)}(\Omega_{X/Y} \otimes k(x)) = 0$ for any point $x \in X$. We generalize these notions to $\ZSmk$. We first define what it means to be a morphism of finite type in $\ZSmk$, since smooth of relative dimension zero subsumes of finite type. We simply define a morphism $\phi: \sum m_i [X_i] \rarr \sum n_j [Y_j]$ in $\ZSmk$ to be of finite type if in the decomposition $\phi = \sum_{j \in J, i \in I_j} \mij \phi_{ji}$, each of the morphisms $\phi_{ji}: [X_i] \rarr [Y_j]$ is of finite type itself, for $j \in J$ and $i \in J_j$. We define $\phi: X \rarr Y$ in $\ZSmk$ to be etale in exactly the same fashion that it was defined in $\Smk$, namely $\phi$ flat, for any irreducible components $X'$ of $X$ and $Y'$ of $Y$, if $\phi(X') \subset Y'$, then $\deem X' = \deem Y'$, and for any $x \in X$, $\deem ( \Omega_{X/Y} \otimes k(x)) = 0$. Flatness has been defined in section 4.2. For the dimensional statement on irreducible components, consider $X' = \sum m_i [X_i'] \subset X$ and $Y' = \sum n_j [Y_j'] \subset Y$. We consider a morphism $\phi: X \rarr Y$ which has the usual decomposition $\phi = \sum \mij \phi_{ji}$ so that we can actually write $X' = \sum_{j \in J, i \in I_j} \mij [X_i']$ and $Y' = \sum_{j \in J, i \in J_i} \mij [Y_j']$. However for irreducible components, we just consider individual such terms, $X'$ is of the form $[X_i']$ for some $i \in I_j$, and $Y' = [Y_j']$. Then the condition $\phi[X_i'] \subset [Y_j']$ reads $\phi_{ji} [X_i'] \subset [Y_j']$. Then $\deem(X_i') = \deem(Y_j')$ for all such choices if the $\phiji$'s satisfy this dimensional statement on irreducible components i.e. $\phi = \sum \mij \phiji$ satisfies the dimensional statement on irreducible components if all of the $\phiji$ do. Finally we generalize the dimensional statement involving the sheaf of relative differentials. The local ring $\cO_x = \otimes m_i \cO_{x_i}$ has $\gm_x = \otimes m_i \gm_{x_i}$ for maximal ideal, with residue field $k(x) = \otimes m_i \cO_{x_i}/m_i \gm_{x_i} = \otimes m_i\cO_{x_i}/\gm_{x_i}$. Now since we consider a morphism $\phi: X = \sum m_i [X_i] \rarr Y = \sum n_j [Y_j]$ then we have to consider the decomposition $\cO_x = \otimes_{j \in J, i \in I_j} \mij \cO_{x_i}$, from which it follows $k(x) = \otimes_{j \in J, i \in I_j} \mij \cO_{x_i}/\gm_{x_i}$. Then we have:
\begin{align}
	\Omega_{X/Y} \otimes k(x) &= \otimes_{\substack{j \in J \\ i \in I_j}} \mij \big(\Omega_{X_i/Y_j} \otimes \cO_{x_i}/\gm_{x_i}\big)  \nonumber \\
	&= \otimes \mij \big( \Omega_{X_i/Y_j} \otimes k(x_i) \big) \nonumber
\end{align}
Hence:
\beq
\dim \big( \Omega_{X/Y} \otimes k(x) \big)= \sum_{\substack{j \in J \\ i \in I_j}} \mij \dim(\Omega_{X_i/Y_j}\otimes k(x_i)) = 0 \nonumber 
\eeq
if each summand is zero. We have shown:
\begin{et} \label{et}
	$\phi = \sum \mij \phiji$ is etale if and only if each $\phiji$ is etale.
\end{et}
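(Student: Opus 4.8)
The plan is to unwind the definition of \'etale recalled just above the statement: in $\Smk$ (and now, by the stipulations of this section, in $\ZSmk$), a morphism is \'etale precisely when it is of finite type, flat, satisfies the dimension equality on irreducible components $X' \rarr Y'$ with $\phi(X') \subset Y'$, and has $\dim(\Omega_{X/Y} \otimes k(x)) = 0$ for every $x$. So the lemma will follow once each of these four clauses is shown to hold for $\phi = \sum \mij \phiji$ if and only if it holds for every constituent $\phiji \colon [X_i] \rarr [Y_j]$; the lemma is then the conjunction of these four equivalences.

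The first two clauses are essentially already in place. "Of finite type" was \emph{defined} in this section to mean that each $\phiji$ is of finite type, so that clause is component-wise by fiat. For flatness I would invoke Section~\ref{flat}: there $X$ was declared flat over $Y$ at a point $x = \sum \mij x_i$ exactly when $\cOX_{,x} = \otimes \mij \cO_{x_i}$ is a flat $\cO_y = \otimes \mij \cO_{y_{ji}}$-module, and since $m\uF \otimes m\uF' = m(\uF \otimes \uF')$ is flat over $m\cO$ iff each tensor factor is flat over the corresponding $\cO$, flatness of $\phi$ at $x$ is equivalent to flatness of each $\phiji$ at $x_i$; running over all $x$, and noting points of $X$ decompose as $\sum \mij x_i$ with $x_i \in X_i$, gives the equivalence globally.

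For the dimension statement on irreducible components I would use the observation already recorded in the paragraph preceding the statement: an irreducible component of $X = \sum m_i[X_i]$ is of the form $[X_i']$ with $X_i' \subset X_i$ irreducible, likewise $[Y_j']$ for $Y$, and for such components the inclusion $\phi[X_i'] \subset [Y_j']$ means exactly $\phiji[X_i'] \subset [Y_j']$; hence $\deem X_i' = \deem Y_j'$ is required of $\phi$ over all admissible pairs iff it is required of each $\phiji$. For the differential clause I would quote the computation of Section 4.3, $\Omega_{X/Y} = \otimes \Omega_{\mij[X_i]/\mij[Y_j]}$, together with the identity $\Omega_{X/Y} \otimes k(x) = \otimes \mij(\Omega_{X_i/Y_j} \otimes k(x_i))$ derived just above, and take $k(x)$-dimensions to obtain $\dim(\Omega_{X/Y} \otimes k(x)) = \sum \mij \dim(\Omega_{X_i/Y_j} \otimes k(x_i))$. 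Each $\mij$ appearing here is a strictly positive integer (were it absent there would be no morphism $[X_i] \rarr [Y_j]$, by the indexing convention for decompositions), so this sum of nonnegative integers vanishes iff each summand does; running over all $x = \sum \mij x_i$ makes this equivalent to the differential vanishing for every $\phiji$ at every $x_i$. Conjoining the four equivalences yields the lemma.

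Honestly, there is no serious obstacle here: the content of the statement lies entirely in the preparatory computations of Sections 4.2--4.4, and the proof is a matter of recording that "\'etale = (finite type) $+$ flat $+$ dimension-on-components $+$ differential-vanishing" is a conjunction of conditions each of which has been arranged to be tested component-wise. The only points deserving a word of care are the elementary facts that a tensor product of sheaves of modules is flat iff each factor is, and that a sum of nonnegative integers is zero iff each term is, together with the remark that the coefficients $\mij$ are positive and hence do not interfere with the vanishing argument.
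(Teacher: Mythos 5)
Your proof follows the paper's own argument essentially verbatim: the paper's ``proof'' is precisely the preparatory discussion of Section 4.4, which reduces \'etaleness to the conjunction of finite type, flatness, the dimension condition on irreducible components, and the vanishing of $\dim(\Omega_{X/Y}\otimes k(x))$, each verified componentwise via the decompositions established in Sections 4.2--4.3. If anything, you are slightly more careful than the paper on the ``only if'' direction of the differential clause (the paper only records that the sum vanishes \emph{if} each summand does, whereas you note that positivity of the coefficients $m_{i_j}$ gives the converse), but this is a refinement of the same argument, not a different route.
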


\subsection{Elementary distinguished squares}
Following \cite{V}, \cite{VM}, \cite{ORV}, we define an elementary distinguished square in $\ZSmk$ to be a square of the form:
\beq
\xymatrix{
	p^{-1}(U) \ar[d] \ar[r] &V \ar[d]^p \\
	U \ar[r]_{\psi} &X
} \nonumber
\eeq
where $p$ is an etale morphism of $\mathbb{Z}$-schemes, $\psi$ is an open embedding, and $p^{-1}(X-U) \cong X-U$. Observe that $\psi$ being an open embedding implies that if $X = \sum m_i [X_i]$, $U \cong \sum m_i [U_i]$, where $U_i$ is an open subset of $X_i$.\\ 

We define elementary distinguished squares in this section, since they deal with morphisms of $\mathbb{Z}$-schemes. However it is in the definition of sheaves on $\ZSmk$ that such squares are important, and we define those next.\\

\section{Sheaves on $\ZSmk$}
\subsection{Sheaves}
Since representables presheaves generate presheaves \cite{MML}, we will deduce properties of sheaves on $\ZSmk$ from those of representable presheaves. We first consider $\cF = \Hom(-,\sum n_j[Y_j])$. We have:
\begin{align}
	\cF(\sum m_i [X_i]) &=\Hom(\sumI m_i [X_i], \sumJ n_j [Y_j]) \nonumber \\
	&= \oplus_{i \in I} \Hom(m_i [X_i], \sumJ n_j [Y_j]) \nonumber \\
	&=\oplus_{i \in I} \cF(m_i [X_i]) \nonumber
\end{align}
hence for sheaves $\cF$ on $\ZSmk$, we also have:
\beq
\cF(\sum m_i [X_i]) = \oplus_{i \in I} \cF(m_i [X_i]) \nonumber
\eeq

A presheaf $\cF$ is a sheaf for a Grothendieck topology on $\ZSmk$ if for any covering $\{ \fa: \Ua \rarr X \}$ in $\ZSmk$ for this topology, we have an equalizer:
\beq
\cF(X) \rarr \prod_{\alpha} \cF(\Ua) \rightrightarrows \prod_{\alpha, \beta} \cF(\Ua \times_X \Ub) \nonumber
\eeq

\subsection{Nisnevich topology on $\ZSmk$}
We define a Nisnevich covering in $\ZSmk$ as in \cite{V}, \cite{VM}, \cite{ORV}, to be a finite family of etale morphisms $\{\fa: \Ua \rarr X\}$  such that for every $x \in X$, there is some $\alpha$, there is some $u \in \Ua$ mapping to $x$ such that $k(u) \cong k(x)$. On $\ZSmk$ this reads as follows. $\{\fa: \Ua \rarr X \}$ is a Nisnevich covering if for any $x = \sum m_j x_j$ in $X$ with $x_j \in X_j$, there is some $\Ua = \sum_{i \in \Ia} \muai [\Uai] = \sum_{j \in J, i \in \Iaj} \muaij [\Uai]$, there are $u_{\alpha i} \in U_{\alpha i}$  for all $i \in \Iaj $, with $\faji: u_{\alpha i} \rarr x_j$ (subject to $\sum_{i \in \Iaj} \muaij = m_j$), such that $k(u_{\alpha i}) \cong k(x_j)$, and this for all $j \in J$. The morphisms $\fa$ being etale means that each $\faji: \Uai \rarr X_j$ is etale. 

\begin{NisCov}
$\{\fa: \Ua \rarr X, \; \alpha \in A \}$ is a Nisnevich covering if for all $j \in J$, $\{\fa: \Ua \rarr m_j [X_j] , \; \alpha \in A_j\}$ is a Nisnevich covering, and $A = \cap_{j \in J} A_j$. 
\end{NisCov}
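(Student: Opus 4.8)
The plan is to reduce the sheaf/covering statement on $\ZSmk$ to the already-established descriptions of objects and morphisms in $\ZSmk$, namely that $X = \sum_{j \in J} m_j[X_j]$ and that any morphism $\fa: \Ua \rarr X$ decomposes as $\fa = \sum_{j \in J, i \in \Iaj} \muaij \faji$ with $\faji: [\Uai] \rarr [X_j]$, together with Lemma~\ref{et}, which tells us $\fa$ is etale iff each $\faji$ is etale. The heart of the matter is unwinding the definition of a Nisnevich covering on $\ZSmk$ given just above the statement: the condition is phrased pointwise over $x = \sum_j m_j x_j \in X$, and a point of $X$ is literally a formal sum of points $x_j \in X_j$ weighted by the (fixed) coefficients $m_j$. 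So the first step is to observe that the data of a family $\{\fa: \Ua \rarr X\}$ over $X = \sum_j m_j[X_j]$ is equivalent, by the morphism decomposition, to the data, for each $j \in J$, of a family of morphisms into the single summand $m_j[X_j]$ — with the bookkeeping subtlety that a given index $\alpha$ contributes to the $j$-th summand only when $j$ lies in the index set attached to $\Ua$, which is exactly what the notation $A_j \subseteq A$ records, and that $A = \cap_{j \in J} A_j$ expresses that to cover all of $X$ we need indices that are available over every summand.

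Next I would verify the covering condition summand by summand. Fix $x = \sum_j m_j x_j \in X$. By the definition of Nisnevich covering on $\ZSmk$, finding a suitable $\alpha$, lifts $\uai \in \Uai$ with $\faji: \uai \rarr x_j$ subject to $\sum_{i \in \Iaj} \muaij = m_j$, and residue field isomorphisms $k(\uai) \cong k(x_j)$ \emph{for all $j \in J$ simultaneously}, is precisely the conjunction over $j$ of the statement that $\{\fa: \Ua \rarr m_j[X_j], \ \alpha \in A_j\}$ is a Nisnevich covering at the point $m_j x_j$. The etale hypothesis matches up identically via Lemma~\ref{et}: $\fa$ is etale iff every $\faji$ is etale, and the latter is exactly the etaleness required of the family over $m_j[X_j]$. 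So one direction is: if each of the $J$-many families is a Nisnevich covering and a common index is available (the $A = \cap_j A_j$ condition guarantees we may choose the \emph{same} $\alpha$ working over all summands at once), then the original family is a Nisnevich covering; and conversely, restricting a Nisnevich covering of $X$ to each summand $m_j[X_j]$ yields a Nisnevich covering of $m_j[X_j]$, with index set $A_j$ the set of $\alpha$ whose source actually maps into the $j$-th summand.

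The main obstacle I anticipate is the quantifier order: the definition on $\ZSmk$ demands a \emph{single} $\alpha$ that works for all $j$, whereas the summand-wise statement a priori allows the covering index to depend on $j$. This is exactly why the condition $A = \cap_{j \in J} A_j$ must appear — it forces the per-summand witnesses to be drawn from a common pool of indices, so that for each point $x$ one can first pick, for each $j$, an index $\alpha_j \in A_j$ with the residue-field lift over $x_j$, and then use that $\alpha_j \in A_j \subseteq A_{j'}$ for all $j'$ means that index is also available — but to get a genuinely common $\alpha$ one must be slightly careful: either one argues that the relevant witness at $x$ can in fact be taken in $\cap_j A_j = A$ directly (which is the natural reading, since a Nisnevich covering of $X$ produces for each $x$ one $\alpha$ working at $x$, hence over every summand through $x$), or one restricts attention to coverings for which the index sets are already arranged compatibly. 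I would make this precise by taking, at the outset, the index set of the family over $X$ to be $A$ and the index set of the restricted family over $m_j[X_j]$ to be $A_j = \{\alpha \in A : j \in J \text{ appears in the decomposition of } \Ua\}$, so that $A = \cap_j A_j$ holds by construction for any family actually defined over all of $X$; then the equivalence is a direct matter of reading the pointwise condition on both sides. Everything else — the decomposition of points, of morphisms, of etaleness — is routine given the machinery built in Sections~3 and~4.
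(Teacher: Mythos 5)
Your proposal is correct and follows essentially the same route as the paper's proof: unwind the pointwise Nisnevich condition summand by summand, decompose the point $x = \sum m_j x_j$ and the residue fields $k(x)$ and $k(\ua)$ as tensor products over $j \in J$ and $i \in \Iaj$, and match the etaleness requirement componentwise via Lemma~\ref{et}. The one place you are more careful than the paper is the quantifier issue of extracting a single $\alpha$ working over all summands simultaneously --- the paper dispatches this with the parenthetical ``(after possible reindexing of the $\Uai$'s)'' and the condition $A = \cap_{j \in J} A_j$, which is exactly the resolution you propose.
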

\begin{proof}
To have a Nisnevich covering over $m_j[X_j]$ means for any $x_j \in X_j$, there is some $\alpha \in A_j$, there is some $\ua \in \Ua$ mapping to $m_j x_j$, such that $k(\ua) \cong k(m_j x_j)$. Precisely, this means there is some $\ua = \sum_{i \in \Iaj} \muaij \uai \in \Ua = \sum_{i \in \Iaj} \muaij [\Uai]$, with each $\uai$ mapping to $x_j$ for all $i \in \Iaj$, with $k(\uai) \cong k(x_j)$, subject to $\sum_{i \in \Iaj} \muaij = m_j$.	Now if $x = \sum m_j x_j \in X$, assuming the hypothesis of the lemma, there is some $ \alpha \in  \cap_{j \in J} A_j$ (after possible reindexing of the $\Uai$'s), there is some $u_{\alpha i} \in \Uai$ for any $i \in \Iaj$ with $u_{\alpha i} \rarr x_j$, giving the decompositions:
\begin{align}
	k(x) &= \otimes_{\substack{j \in J \\ i \in I_j}} \muaij \cO_{x_j}/\muaij \gm_{x_j} = \otimes_{\substack{j \in J \\ i \in I_j}} \muaij \cO_{x_j} / \gm_{x_j} = \otimes \muaij k(x_j) \nonumber \\
	k(\ua) &= \otimes_{\substack{j \in J \\ i \in I_j}} \muaij \cO_{\uai}/\muaij \gm_{\uai} = \otimes_{\substack{j \in J \\ i \in I_j}} \muaij \cO_{\uai} / \gm_{\uai} = \otimes \muaij k(\uai)\nonumber 
\end{align}
with $u_{\alpha} = \sum \muaij u_{\alpha i}$. Thus we see that $k(x) \cong k(u_{\alpha})$ if $k(x_j) \cong k(u_{\alpha i})$ for each $j \in J$ and $i \in \Iaj$.In other terms $\{\fa: \Ua \rarr X, \; \alpha \in A \}$ is a Nisnevich covering if for all $j \in J$, $\{ \fa: \Ua \rarr m_j [X_j] , \, \alpha \in A_j \}$ is a Nisnevich covering, and $A = \cap_{j \in J} A_j$. 
\end{proof}	

For later purposes, denote by $(a_1X \times_{mX} \cdots \times_{mX} a_nX)_{\Sigma}$ the limit $a_1X \times_{mX} \cdots \times_{mX} a_nX$, subject to $\sum a_i = m$, and denote by $(a_1X \times_{mX} \cdots \times_{mX} a_nX)^{\Delta}$ the diagonal of the limit $a_1X \times_{mX} \cdots \times_{mX} a_nX$. We can then represent a Nisnevich covering over $m_j [X_j]$ as a Nisnevich covering $\{ \prod \muaij \faji: \prod \muaij \Uai \rarr (\mu_{\alpha i_1}X_j\times_{m_jX_j} \cdots \times_{m_jX_j} \mu_{\alpha i_k} X_j)^{\Delta}_{\Sigma} \}$ where $k = |\Iaj|$.\\

We denote by $\Sh(\ZSmk, \Nis)$ the category of sheaves on $\ZSmk$ equipped with the Nisnevich topology.

\subsection{Elementary distinguished squares}
We now prove a generalization of Proposition 3.1.4 of \cite{VM}, which in the present situation would read as follows:
\begin{ShSqu} \label{ShSqu}
	A presheaf on $\ZSmk$ is a sheaf if and and only if it maps every elementary distinguished square in $\ZSmk$ to a cartesian square.
\end{ShSqu}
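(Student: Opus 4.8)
The plan is to prove the two implications by reducing, wherever possible, to the corresponding statement over $\Smk$, namely Proposition 3.1.4 of \cite{VM}, using the component-wise decompositions already established. Recall the ingredients: a sheaf $\cF$ on $\ZSmk$ satisfies $\cF(\sum m_i[X_i]) = \oplus_i \cF(m_i[X_i])$; by Lemma \ref{et} an elementary distinguished square in $\ZSmk$ decomposes, component by component, into elementary distinguished squares of the underlying schemes (an open embedding $U\hookrightarrow X$ forces $U = \sum m_i[U_i]$ with $U_i\subseteq X_i$ open, and $p$ etale forces $V = \sum m_i[V_i]$ with $V_i\to X_i$ etale and $p_i^{-1}(X_i-U_i)\cong X_i-U_i$); and by the Nisnevich-covering lemma of \S5.2 a Nisnevich covering of $X = \sum m_j[X_j]$ is the same as a compatible family of Nisnevich coverings of the $m_j[X_j]$, which in turn induce Nisnevich coverings $\{\faji\colon \Uai\to X_j\}$ of the $X_j$ in $\Smk$. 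Since limits of presheaves are computed objectwise, a square of sets obtained by evaluating $\cF$ is cartesian if and only if each of its component squares is cartesian.

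For the implication ``sheaf $\Rightarrow$ cartesian'', let the elementary distinguished square have vertices $p^{-1}(U)$, $V$, $U$, $X$, with $p$ etale and $\psi\colon U\to X$ an open embedding. The condition $p^{-1}(X-U)\cong X-U$ makes $\{\psi\colon U\to X,\ p\colon V\to X\}$ a Nisnevich covering, so $\cF$ fits into an equalizer $\cF(X)\to \cF(U)\times\cF(V)\rightrightarrows \prod\cF(\text{pairwise fibre products})$. Now $U\times_X U = U$ since $\psi$ is a monomorphism; $U\times_X V\cong V\times_X U\cong p^{-1}(U)$ by definition; and, $p$ being unramified and separated, the diagonal $\Delta(V)\hookrightarrow V\times_X V$ is open and closed, so $V\times_X V = \Delta(V)\sqcup R$ with $R$ a clopen subscheme lying over $U$, hence sitting inside $p^{-1}(U)\times_X p^{-1}(U)$. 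Unwinding the equalizer with these identifications and the additivity of $\cF$, the only surviving constraint on a pair $(a,b)\in\cF(U)\times\cF(V)$ is $a|_{p^{-1}(U)} = b|_{p^{-1}(U)}$; the $R$-coordinate is automatic because the two projections $R\rightrightarrows p^{-1}(U)$ become equal after composition with $p^{-1}(U)\to U$, along which $a$ is pulled back. Thus $\cF(X) = \cF(U)\times_{\cF(p^{-1}(U))}\cF(V)$, i.e.\ the square is cartesian.

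For the converse, suppose $\cF$ sends every elementary distinguished square in $\ZSmk$ to a cartesian square. For each integer $m$ put $\cF^{(m)} = \cF(m\,\cdot(-))$, a presheaf on $\Smk$; since $m\cdot(-)$ carries an elementary distinguished square of $\Smk$ to one of $\ZSmk$ (etale-ness of $m[p]$ by Lemma \ref{et}, open embedding of $m[\psi]$, and $m[p]^{-1}(m[X]-m[U]) = m[p^{-1}(X-U)]\cong m[X-U]$), each $\cF^{(m)}$ sends elementary distinguished squares of $\Smk$ to cartesian squares, hence is a Nisnevich sheaf on $\Smk$ by Proposition 3.1.4 of \cite{VM} (as in the classical statement one also records the normalization $\cF([\emptyset]) = \ast$, obtained from the degenerate ``empty'' distinguished squares, which in particular yields the additivity $\cF(\sum m_j[X_j]) = \oplus_j\cF(m_j[X_j])$). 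It then remains to reassemble these component sheaf conditions: given a Nisnevich covering $\{\fa\colon \Ua\to X\}$ of $X = \sum m_j[X_j]$, decompose it over the $m_j[X_j]$ by the lemma of \S5.2, rewrite each $\cF(\Ua) = \cF(\sum_i \muaij[\Uai]) = \oplus_i \cF^{(\muaij)}(\Uai)$ by additivity, match the fibre products $\Ua\times_X\Ub$ with the scheme-level fibre products $\Uai\times_{X_j}U_{\beta i'}$ under the coefficient bookkeeping encoded by $(a_1 X\times_{mX}\cdots\times_{mX} a_n X)^{\Delta}_{\Sigma}$, and conclude the equalizer for $\cF$ at the given covering from the equalizers for the $\cF^{(m)}$ applied to the induced $\Smk$-coverings $\{\Uai\to X_j\}$.

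The main obstacle is precisely this last reassembly step: keeping the coefficient decompositions of the $\Ua$, of $X$, and of all iterated fibre products $\Ua\times_X\Ub$ mutually compatible, so that the $\ZSmk$-equalizer genuinely factors as a direct-sum/tensor combination of $\Smk$-equalizers without losing or duplicating summands. An alternative, more uniform route that sidesteps the bookkeeping is to verify that the Nisnevich cd-structure on $\ZSmk$ generated by the elementary distinguished squares is complete, regular and bounded --- regularity and boundedness reducing, via Lemma \ref{et}, to the known case over $\Smk$ --- and then to invoke Voevodsky's general criterion identifying sheaves with presheaves that take distinguished squares to pullback squares; I would nonetheless keep the component reduction as the primary argument, as it is most in keeping with the rest of the paper.
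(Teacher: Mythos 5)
Your first implication is fine and agrees with the paper, which simply cites the formality of the argument in \cite{VM}; your unwinding of the equalizer for the covering $\{U\rarr X,\ V\rarr X\}$ is a more detailed version of the same thing. The problem is the converse, where your argument stops exactly at the point where the actual work lies. You propose to reduce to $\Smk$ via the presheaves $\cF^{(m)}=\cF(m\cdot(-))$ and then ``reassemble,'' but the reassembly is not a bookkeeping nuisance that can be deferred --- it is the substance of the claim. A Nisnevich covering of $m_j[X_j]$ in $\ZSmk$ has members $\Ua=\sum_{i\in\Iaj}\muaij[\Uai]$ whose coefficients $\muaij$ only \emph{sum} to $m_j$; such an object is not in the image of any single functor $m\cdot(-)$, so knowing that each $\cF^{(m)}$ is a Nisnevich sheaf on $\Smk$ says nothing directly about $\cF(\Ua)$ or about $\cF(\Ua\times_X\Ub)$, whose own coefficient decompositions (the $(a_1X\times_{mX}\cdots\times_{mX}a_nX)^{\Delta}_{\Sigma}$ objects) mix summands across the $\alpha$'s. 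You acknowledge this as ``the main obstacle'' and then do not resolve it, so the converse direction is not proved. The cd-structure alternative you sketch would also require verifying completeness, regularity and boundedness for the $\ZSmk$ cd-structure, which again is unaddressed.

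For comparison, the paper does not attempt a componentwise reduction to Proposition 3.1.4 of \cite{VM} at all; it replays the Morel--Voevodsky argument \emph{inside} $\ZSmk$. Concretely: it defines rational maps and rational sections of $\bZ$-schemes, constructs a rational section of $\coprod_\alpha\Ua\rarr mX$ by applying Lemma 1.5 of \cite{Ho} to the componentwise maps $p_i=\muai\pai$ (which are etale, of finite type and completely decomposed), deduces that every Nisnevich covering $\cU$ of $mX$ admits a splitting sequence $mX=mZ_0\supset\cdots\supset mZ_{n+1}=\eset$, and then argues by induction on the minimal length $n$: the decomposition $p_i^{-1}(\muai Z_n)=\mathrm{Im}(\sigma_i)\coprod Y_i$ produces an elementary distinguished square on $W=X-Z_n$ and $V=\prod(U_i-Y_i)$, to which the cartesian-square hypothesis is applied, while $\cU\times_{mX}mW\rarr mW$ has a splitting sequence of length $n-1$. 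If you want to salvage your approach you must either carry out the coefficient-matching of the equalizers explicitly, or switch to the splitting-sequence induction; as written, the second half of your proof is a statement of intent rather than an argument.
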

	The proof is identical in form to \cite{VM}, and differs only in the fact that we work in $\ZSmk$, not $\Smk$, hence we have to deal with hybrid/local indexed terms, which does not make the proof any different in spirit, but there are technicalities we have to keep track of.\\

	That a sheaf on $\ZSmk$ maps elementary distinguished squares to cartesian squares follows from the original proof of \cite{VM} due to its formality. Vice-versa, suppose now a presheaf $\cF$ on $\ZSmk$ maps elementary distinguished squares to cartesian squares. We aim to show it is a sheaf. In other terms if $\cU = \{ \fa: \Ua \rarr X \}$ is a Nisnevich covering, we want:
	\beq
	\cF(X) \rarr \prod_{\alpha} \cF(\Ua) \rightrightarrows \prod_{\alpha, \beta} \cF(\Ua \times_X \Ub) \nonumber
	\eeq
	to be exact. To do so we define a splitting sequence for $\cU$ in exactly the same manner that it was initially introduced in \cite{VM}, but obviously adapted to our setting. We first need to prove that if $\cU$ is a Nisnevich covering, it admits a splitting sequence. This means we have to first define rational sections in $\ZSmk$.\\

\subsection{Rational sections of $\ZSmk$}
We say $X = \sum m_i [X_i]$ is Noetherian if each $X_i$ is Noetherian in $\Smk$ for all $i$. We generalize to $\ZSmk$ the definition of rational maps as initially introduced in \cite{G}. Let $X = \sum m_i [X_i]$ and $Y = \sum n_j [Y_j]$ be two objects of $\ZSmk$. Let $U = \sum m_i [U_i]$ and $V = \sum m_i [V_i]$ be two open subsets of $X$. Then $f: U \rarr Y$ and $g: V \rarr Y$ are said to be equivalent if they coincide in an open dense subset of $U \cap V$, of the form $\sum m_i [W_i]$, with $W_i$ an open dense subset of $U_i \cap V_i$, i.e. if $f|_{U_i}$ and $g|_{V_i}$ agree on $W_i$ for all $i$. Then one defines a rational map $X \rarr Y$ in $\ZSmk$ to be an equivalence class of morphisms of dense open subsets $X' = \sum m_i [X'_i]$ of $X$ into $Y$, with $X'_i$ dense open subset of $X_i$ for each $i$. To be specific, a rational map from $X= \sum m_i [X_i]$ to $\Ua = \sum \muaj [\Uaj]$ has for representation:
\beq
\sum_{i \in I}(\sum_{j \in \Jai} \mij [X_i']) \rarr \sum_{i \in I} ( \sum_{j \in \Jai} \mij [\Uaj]) \nonumber
\eeq
where $X_i'$ is a dense open subset of $X_i$ for all $i \in I$ and $\sum_{i \in I_j} \mij = \muaj$, $m_i = \sum_{j \in \Jai} \mij$. The above map is given on each $X_i'$ by $\sum_{j \in \Jai} \mij [X_i'] \rarr \sum_{j \in \Jai} \mij [\Uaj]$, so a rational map is of the form $\sum_{i \in I, j \in \Jai} \mij \raji$ where all $\raji: [X_i] \rarr [\Uaj]$ for $j \in \Jai$ are rational maps, simultaneously over the same open set $X_i'$ for $i$ fixed, that is giving a rational map on $[X_i]$ is equivalent to giving a map:
	\beq
	\big( m_{i_1}X_i' \times_{m_iX_i'} \cdots \times_{m_iX_i'}m_{i_k} X_i'\big)^{\Delta}_{\Sigma} \rarr \prod_{j \in \Jai} \mij \Uaj \nonumber
	\eeq
Now a rational section of $\Ua \rarr X$ is a rational map $X \rarr \ \Ua = \sum_{i \in I, j \in \Jai} \muaij [\Uaj]$ which is also a section, i.e. a map $\sum_{i \in I, j \in \Jai} \mij \saji$ where each $\saji$ is a rational map, and a section, so that $\sum_{i \in I, j \in \Jai} \mij \paij \circ \saji =  id_X$, with $\paij: [\Uaj] \rarr [X_i]$. \\

\subsection{Construction of rational sections of Nisnevich covers}
Observe that in the initial Nisnevich covering of $X = \sum m_j [X_j]$, we have morphisms $\fa: \Ua \rarr X$, with each $\Ua = \sum_{j \in J, i \in \Iaj} \muaij [\Uai]$. If we write $U = \coprod_{\alpha} \Ua$, we have:
\beq
U = \coprod_{\alpha} \sum_{\substack{j \in J \\ i \in \Iaj}} \muaij [\Uai] = \sum_{j \in J} \coprod_{\alpha} \sum_{i \in \Iaj} \muaij [\Uai] \nonumber
\eeq
and the collection of morphisms $\coprod_{\alpha} \sum_{i \in \Iaj} \muaij [\Uai] \rarr m_j [X_j]$ forms a Nisnevich covering of $m_j [X_j]$. Indeed, $\fa = \sum_{ij} \muaij \faji$ etale implies $\sum_i \muaij \faji$ etale for $\alpha \in A$, so $\coprod_{\alpha} \sum_i \muaij \faji$ is etale by Lemma ~\ref{et}. It follows $\{ \fa: \Ua \rarr X \}$ is a Nisnevich covering implies $\{ \coprod_{\alpha} \sum_i \muaij \Uai \rarr m_j [X_j] \}$ is a Nisnevich covering. We just drop the index $j$ and call the above coproduct $U$. Observe, as pointed out in \cite{VM}, that to give a rational map from $mX$ to $U$ is equivalent to giving one on each irreducible component of $X$, so we might as well assume $X$ to be irreducible. Now we apply Lemma 1.5 of \cite{Ho} to $U=\coprod_{\alpha} \sum_{i \in \Ia} \muai [\Uai] \rarr mX$. Note that we can write this coproduct as $\sum_{\alpha, i \in \Ia} \muai [\Uai]$. Let $x$ be the generic point of $X$, let $\alpha$ be an index such that there is some $u \in \Ua$ over $x$. After reindexing, write $U_i = \muai [\Uai] \coprod$ possible other schemes, none of which is of the form $\mu_{\alpha j} [\Uaj]$ for $j \neq i$. Let $\Ia'$ be the indexing set for those $i$'s. Let $p_{\alpha i} : \Uai \rarr X$. Then each $p_i = \muai \pai \coprod \cdots : U_i \rarr \muai X$ is etale, of finite type, completely decomposed in the sense of Hoyois. It follows it has a rational section $\sigma_i$ for all $i$, hence so does $\prod p_i: \prod U_i \rarr \mu_{\alpha 1} X \times \cdots \times \mu_{\alpha |I_{\alpha}'|}X$. For our rational section we take:
\beq
(\mu_{\alpha 1}X' \times_{mX'} \cdots \times_{mX'} \mu_{\alpha |\Ia'|}X')^{\Delta}_{\Sigma} \xrarr{\prod_{i \in \Ia'} \muai \sigma[\pai]} \prod_{i \in \Ia'} U_i \nonumber
\eeq
as representative, $X'$ dense open subset of $X$, where $\sigma_i = \muai \sigma [\pai]$, as constructed in the previous subsection.

\subsection{Existence of splitting sequences for Nisnevich covers}
We now construct a splitting sequence for $\cU$ over $mX$. We have argued $p = \sum p_i: U = \sum U_i \rarr mX$ has a rational section $\sum_{i \in \Ia'} \muai \sigma [\pai]$, so there is some dense open subset $X'$ of $X$ such that we have a map $\sigma: mX' \rarr U$, section of $p^{-1}(mX') \rarr mX'$, $\sigma = \sum_{i \in \Ia'} \muai \sigma[\pai]$, $p = \sum_{\alpha, i \in \Ia'} \muai \pai$, $p \circ \sigma = mid_{X'}$. The rest of the construction is identical to that of \cite{VM} or \cite{Ho}. This proves that we have a splitting sequence for $\cU$. With this in hand, we can now finish the proof of Proposition ~\ref{ShSqu}:
\begin{dir2}
If a presheaf on the Nisnevich site $\ZSmk$ maps elementary distinguished squares to cartesian squares, it is a sheaf. 
\end{dir2}
\begin{proof}
	Let $\cU = \{ U_i \rarr mX \}$ be a Nisnevich covering of $mX$. The reasoning will be the same as in \cite{VM}, or \cite{Ho}. The only addition we bring here is the index notation to keep track of the components. Let $mX = mZ_0, \cdots, mZ_{n+1} = \eset$ a splitting sequence of minimal length for $\cU$, which exists as we have just shown. Choose a splitting for the morphism $p^{-1}(mZ_n) \rarr mZ_n$, whose existence is guaranteed by the previous subsection. This means picking a rational section $\sigma = \sum_{i \in \Ia'}  \sigma_i$. Since each $p_i : U_i \rarr \muai X$ is etale, we have a decomposition $p_i^{-1}(\muai Z_n) = \text{Im}( \sigma_i) \coprod Y_i$, with $\sigma_i = \muai \sigma[\pai]$, with $Y_i$ a closed subset of $U_i$. Then as in cite \cite{VM}, we let $W = X - Z_n$, $V_i = U_i - Y_i$, $V = \prod_{i \in \Ia'} V_i$. We then claim that $mW$ and $V$ form elementary distinguished squares over $mX$, and that $\cU \times_{mX} mW \rarr mW$ is a Nisnevich covering of $mW$ with a splitting sequence of length $n-1$. For the first claim, we have the following elementary distinguished square as a classical result:
\beq
\xymatrix{
	p_i^{-1}(\muai W) \ar[d] \ar[r] & U_i - Y_i \ar[d]^{p_i} \nonumber \\
\muai W = \muai(X-Z_n) \ar[r]^-{\muai \psi_i}  & \muai X
}
\eeq
with $p_i$ etale, $\psi_i$ open immersion. This is an elementary distinguished square as argued in \cite{VM}. It follows that the following square is also an elementary distinguished square:
\beq
\xymatrix{
	p^{-1}(mW) \ar[d] \ar[r] &\prod(U_i - Y_i)=V \ar[d]^{ \prod p_i = p} \\
	mW \cong \prod_{i \in \Ia} \muai W \ar[r]^-{\prod \muai \psi_i} &mX \cong (\mu_{\alpha 1} X \times_{mX} \cdots \times_{mX} \mu_{\alpha k} X)^{\Delta}_{\Sigma}
} \nonumber
\eeq
About the second point, $\{ p_i: U_i \rarr X \}$ is a Nisnevich covering, so by \cite{VM}, $\cU \times_{\muai X} \muai W \rarr \muai W$ is a Nisnevich covering, from which it follows that $\cU \times_{mX} mW \rarr mW$ is a Nisnevich covering.
\end{proof}

\section{Powered topologies}
We now define a notion of layered morphism, and a corresponding notion of layered (or powered) topology. We define this iteratively. Let $\cC_{N-1}$ be a category with objects $\chi_N$ of some type $\Lambda_N$. With this terminology, $\cC = \cC_0$ is our initial category, with objects $\cX_1$ of type $\Lambda_1$. Suppose each object $\cX_N$ of $\cC_{N-1}$ has some internal structure, and can be regarded as being made up of objects $\cX_{N+1}$ of type $\Lambda_{N+1}$. Categorify each such object $\cX_N$ in such a manner that its objects as a category are its constituting elements, and its morphisms are maps $\cX_{N+1}' \rarr  \cX_{N+1}$ between objects of $\cX_N$, if such maps exist. A \textbf{layered morphism} is any commutative diagram of the form:
\beq
\xymatrix{
	\cX_N \ar[r] &\cY_N \\
\cX_{N+1} \ar@{.>}[u] \ar[r] & \cY_{N+1} \ar@{.>}[u]
} \label{1p9}
\eeq
with possible additional lower layers defined as in:
\beq
\xymatrix{
	\cX_N \ar[r] &\cY_N \\
	\vdots \ar@{.>}[u] & \vdots \ar@{.>}[u] \\
        \cX_{N+p} \ar@{.>}[u] \ar[r] & \cY_{N+p} \ar@{.>}[u]
} \nonumber
\eeq
where in \eqref{1p9}, $\cX_N$ and $\cY_N$ are categories, $\cY_{N+1}$ is an object of $\cY_N$, $\cX_{N+1}$ is an object of $\cX_N$ and we have well-defined maps $\cX_N \rarr \cY_N$ and $\cX_{N+1} \rarr \cY_{N+1}$. Define $\cC^{[N,N+p]}$ as the category with objects of the form $(\cX_{N+p} \hookrightarrow \cdots \hookrightarrow \cX_N)$, with morphisms layered morphisms such as the one above. Identity and composition are obvious. A presheaf on $\cC^{[N,N+p]}$ is a functor $F: \cC^{[N,N+p] \op} \rarr \text{PoSet}$ that maps $ \cX_{N+p} \rarr \cdots \rarr \cX_N$ to $F_{N+p} \cX_{N+p} \larr \cdots \larr F_N \cX_N$. More generally, a functor from $\cC^{[N,N+p]}$ to $\cD^{[N,N+p]}$ is a map $F: \cC^{[N,N+p]} \rarr \cD^{[N,N+p]}$ with $F( \cX_{N+p} \hookrightarrow \cdots \hookrightarrow \cX_N) = F_{N+p} \cX_{N+p} \hookrightarrow \cdots F_N \cX_N$. Strictly speaking, $F_{N+p}$ is a functor on $\cX_{N+p-1}$, which could be different from a functor on $\cY_{N+p-1}$, for which we still use the notation $F_{N+p}\cY_{N+p}$, but categories are assumed by construction to be levelwise of a same type. Thus functors in this setting are understood levelwise not as functors from one category to another, but from one type of categories to another type of categories. Hence:
\beq
\xymatrix{
	\cX_N \ar[r]^{\phi_N} & \cY_N \\
	\cX_{N+p} \ar@{.>}[u] \ar[r]_{\phi_{N+p}} & \cY_{N+p} \ar@{.>}[u]
} \nonumber
\eeq
is mapped to:
\beq
\xymatrixcolsep{5pc}
\xymatrix{
	F_n\cX_n \ar[r]^{F_N \phi_N} & F_N \cY_N \\
	F_{N+p} \cX_{N+p} \ar@{.>}[u] \ar[r]_-{F_{N+p} \phi_{N+p}} & F_{N+p} \cY_{N+p} \ar@{.>}[u]
} \nonumber
\eeq
under $F$. We have $F(id_{\cX_{N+p} \hookrightarrow \cdots \hookrightarrow \cX_N}) = id_{F(\cX_{N+p} \hookrightarrow \cdots \hookrightarrow \cX_N)}$, since each $F_i$ is a functor on types, and $F(g \circ f) = F(g) \circ F(f)$ as well, represented as $F((g \circ f)_{N+p}, \cdots, (g \circ f)_N) = (F(g)_{N+p} \circ F(f)_{N+p}, \cdots, F(g)_N \circ F(f)_N)$.\\

Once that is defined, we can define what we call a layered topology. If $N = 1$, maps $\cX_1 \rarr \cY_1$ are in $\cC_0$. If one categorifies $\cX_1$ and $\cY_1$, $\cX_1 = \{ \cX_2 \}$ and $\cY_1 = \{ \cY_2 \}$, in writing:
\beq
\xymatrix{
	\cX_1 \ar[r] & \cY_1 \\
	\cX_2 \ar@{.>}[u] \ar[r] & \cY_2 \ar@{.>}[u]
} \nonumber
\eeq
the bottom map is no longer in $\cY_1$. Hence if we regard such a bottom map as an element of a covering of $\cY_2$, necessarily coverings, hence layered topologies, must be interpreted in a looser sense. We formalize this: suppose our categories admit pullbacks. Define a basis for a \textbf{loose topology} on $\cY_N$ for $N \geq 1$ to be given by a function $K$ which assigns to each object $\cY_{N+1}$ of $\cY_N$ a collection $K(\cY_{N+1})$ of families of morphisms codomain $\cY_{N+1}$, but with domains categories that are possibly different from $\cY_{N+1}$, satisfying the same conditions as those of covering families for classical Grothendieck topologies.\\

Suppose now we have a basis for a loose topology on $\cX_{n-1} = \{ \cX_N \}$ , with $\cY_n \rarr \cX_n$ a covering map in $K(\cX_n)$, and write $\cX_n = \{ \cX_{n+1} \}$ and $\cY_n =\{ \cY_{n+1} \}$, categorified. Suppose $\cX_N$ has a basis for a loose topology as well, with $\cY_{n+1} \rarr \cX_{n+1}$ a covering map in $K(\cX_{n+1})$. Then a covering map, element of a covering family in $K(\cX_{n+1},\cX_n)$, is defined to be a layered morphism:
\beq
\xymatrix{
	\cY_n \ar[r] &\cX_n \\
	\cY_{n+1} \ar@{.>}[u] \ar[r] & \cX_{n+1} \ar@{.>}[u] 
} \nonumber
\eeq
where the top map is in $K(\cX_n)$, and the bottom one is in $K(\cX_{n+1})$. Hence loose covering maps in $\cC^{[N,N+p]}$ are layered morphisms that are levelwise loose covering maps, hence also follow the same defining properties of covering maps for traditional Grothendieck topologies. Indeed, if in a diagram such as the one above, the top and bottom maps are isomorphisms, the whole diagram is itself in $K(\cX_{n+1}, \cX_n)$ by definition. It is also clear compositions are stable; if
\beq
\xymatrix{
	\cX_i \ar[r] &\cX \\
	\cX_i' \ar@{.>}[u] \ar[r] & \cX' \ar@{.>}[u] 
}  \nonumber
\eeq
is a covering map in $K(\cX', \cX)$, and if:

\beq
\xymatrix{
	\cX_{ij} \ar[r] &\cX_i \\
	\cX_{ij}' \ar@{.>}[u] \ar[r] & \cX_i' \ar@{.>}[u] 
} \nonumber
\eeq
is a covering map in $K(\cX_i', \cX_i)$, then the composition:
\beq
\xymatrix{
	\cX_{ij}' \ar[r] &\cX_i \ar[r] &\cX \\
	\cX_{ij}' \ar@{.>}[u] \ar[r] & \cX_i' \ar@{.>}[u] \ar[r] &\cX' \ar@{.>}[u]
} \nonumber
\eeq
is in $K(\cX', \cX)$. Now let:
\beq
\xymatrix{
	\cX_i \ar[r] &\cX \\
	\cX_i' \ar@{.>}[u] \ar[r] & \cX' \ar@{.>}[u] 
} \nonumber
\eeq
be elements of $K(\cX', \cX)$, indexed by i, and consider any layered morphism:
\beq
\xymatrix{
	\cZ \ar[r] &\cX \\
	\cZ' \ar@{.>}[u] \ar[r] & \cX' \ar@{.>}[u] 
} \nonumber
\eeq

we have:
\beq
\setlength{\unitlength}{0.3in}
\begin{picture}(7,10)
	\thicklines
	\put(0,9){$\cX_i'$}
	\put(0,7){$\cX_i$}
	\multiput(0.5,8.8)(0,-0.2){5}{\line(0,1){0.1}}
	\put(0.5,7.8){\vector(0,-1){0.2}}
       \put(3,9){$\cZ'$}
	\put(3,7){$\cZ$}
	\multiput(3.5,8.8)(0,-0.2){5}{\line(0,1){0.1}}
	\put(3.5,7.8){\vector(0,-1){0.2}}
      \put(2,7){$\cX'$}
	\put(2,5){$\cX$}
	\multiput(2.5,6.8)(0,-0.2){5}{\line(0,1){0.1}}
	\put(2.5,5.8){\vector(0,-1){0.2}}
     \put(6,9){$\cZ'$}
	\put(6,7){$\cZ$}
	\multiput(6.5,8.8)(0,-0.2){5}{\line(0,1){0.1}}
	\put(6.5,7.8){\vector(0,-1){0.2}}
     \put(1,2){$\cX_i'$}
	\put(1,0){$\cX_i$}
	\multiput(1.5,1.8)(0,-0.2){5}{\line(0,1){0.1}}
	\put(1.5,0.8){\vector(0,-1){0.2}}
     \put(6,2){$\cX'$}
	\put(6,0){$\cX$}
	\multiput(6.5,1.8)(0,-0.2){5}{\line(0,1){0.1}}
	\put(6.5,0.8){\vector(0,-1){0.2}}
	\put(1.5,9){\line(1,-1){1}}
	\put(2.5,9){\line(-1,-1){1}}
	\put(1.5,7){\vector(0,-1){4}}
	\put(4,8){\vector(1,0){2}}
	\put(2,1){\vector(1,0){4}}
	\put(6.5,6.5){\vector(0,-1){3.5}}
\end{picture} \nonumber
\eeq
is isomorphic to:
\beq
\xymatrix{
	\cX_i' \times_{\cX'} \cZ' \ar[dd] \ar@{.>}[dr] \ar[rrr] &&& \cZ' \ar[dd] \ar@{.>}[dr] \\
	& \cX_i \times_{\cX} \cZ \ar[dd] \ar[rrr] &&&\cZ \ar[dd] \\
	\cX_i' \ar@{.>}[dr] \ar[rrr] &&& \cX' \ar@{.>}[dr] \\
	& \cX_i \ar[rrr] &&& \cX
} \nonumber
\eeq
Now the top map of the back face is in $K(\cZ')$, the top map of the front face is in $K(\cZ)$, which means exactly that the following map:
\beq
\xymatrix{
\cX_i \times_{\cX} \cZ  \ar[r] &\cZ \\
\cX_i'\times_{\cX'} \cZ' \ar@{.>}[u] \ar[r] &\cZ' \ar@{.>}[u] 
} \nonumber
\eeq
is in $K(\cZ', \cZ)$.

\section{Blurry topologies}
Objects of a given type come with a notion of weak equivalence (possibly trivial). Consider the accompanying equivalence relation (generated by the relation of weak equivalence), thereby defining equivalence classes of objects of some given type. Later we will define two schemes $X$ and $Y$ to be equivalent if they have isomorphic Hochschild cohomology, thereby bypassing the need to introduce a notion of weak equivalence, and working directly with an equivalence relation. We will show if $X = \sum m_i [X_i]$, then the Hochschild cohomology of $X$ is defined by $\HHdot(X) = \otimes m_i \HHdot(X_i)$, hence if $Y = \sum n_j [Y_j]$, then $X_i \sim Y_i$ and $n_i = m_i$ for all $i$ implies $X \sim Y$. In particular, if $X = X_1 \times X_2$, $Y = Y_1 \times Y_2$, then if $X_i \sim Y_i$ for $i = 1,2$ we have $X \sim Y$. This means $Y_1 \times Y_2 \in [X_1] \times [X_2]$, implies $Y_1 \times Y_2 \sim X_1 \times X_2 \in [X_1 \times X_2]$. We are led to defining categories of type $\Gamma$ to be those for which their objects satisfy $[A] \times [B] \subset [A \times B]$.\\

Start with $\cC = \cX_0$ a category, which we suppose admits pullbacks. Objects of $\cX_0$ are of type $\Lambda_1$, $\cX_0$ itself is of type $\Lambda_0$. Objects of type $\Lambda_1$ are assumed to come with a notion of weak equivalence. Take the equivalence relation generated by it, and consider its corresponding equivalence classes. One can then write $\cX_0 = \coprod [\cX_1]$. Assume $\cX_0$ comes with a Grothendieck topology already. A basis for a \textbf{blurry topology} on $\cX_0$ is a function $K$ that assigns to each equivalence class $[\cX_1]$ a collection $K [\cX_1]$ of morphisms of $\cX_0$ with codomain $[\cX_1]$. We say $\{ [\cY] \rarr [\cX] \}$ is in $K [\cX]$ if $\{ \cY \rarr \cX \}$ is in $K(\cX)$ (or equivalently if a representative morphism is in $K(\cX)$). This defines a topology on $\cX_0 = \coprod [\cX_1]$, or a blurry topology on $\cX_0 = \{ \cX_1 \}$. Indeed, if $[\cY] \rarr [\cX]$ is an isomorphism, this means we have an isomorphism $\cY \rarr \cX$, hence $\{ \cY \rarr \cX \}$ is in $K(\cX)$, that is $\{ [\cY] \rarr [\cX] \}$ is in $K[\cX]$. Now if $\{ \phi_i: [\cX_i] \rarr [\cX] \}$ is in $K[\cX]$, if $[\cY] \rarr [\cX]$ is any morphism, we show $\{ \pi_2: [\cX_i] \times_{[\cX]} [\cY] \rarr [\cY] \}$ is in $K[\cY]$. First $\{ \underline{\phi_i}: \cX_i \rarr \cX \}$ is in $K(\cX)$, so $\{ \underline{\pi_2}: \cX_i \times_{\cX} \cY \rarr \cY \}$ is in $K(\cY)$, i.e. $\{ [ \cX_i \times_{\cX} \cY] \rarr [\cY] \}$ is in $K[\cY]$.We limit ourselves to categories $\cX_0$ of type $\Gamma : \Lambda_0$, hence $[\cX_i] \times_{[\cX]} [cY] \subset [\cX_i \times_{\cX} \cY]$. It follows $\{ [\cX_i] \times_{[\cX]} [\cY] \rarr [\cY] \}$ is in $K[\cY]$. Finally for composition if $\{ [\cX_i] \rarr [\cX] \}$ is in $K[\cX]$ and $\{ [\cX_{ij}] \rarr [\cX_i] \}$ is in $K[\cX_i]$, then $\{ \cX_i \rarr \cX \}$ is in $K(\cX)$, and $\{ \cX_{ij} \rarr \cX_i \}$ is in $K(\cX_i)$, from which it follows that $\{ \cX_{ij} \rarr \cX_i \rarr \cX \}$ is in $K(\cX)$, which means that  $\{ [\cX_{ij}] \rarr [\cX_i] \rarr [\cX] \}$ is in $K[\cX]$ since $\cX_{ij} \rarr \cX_i \rarr \cX$ is a representative morphism. Thus from a Grothendieck topology on an ordinary category of type $\Gamma$, one can derive a blurry topology. \\

Now let's see what happens if we have layered morphisms. Suppose we have a blurry topology on $\cX_0$, and $\cX_1$ and $\cY_1$ are objects of $\cX_0$, both of type $\Gamma : \Lambda_1$, categorified, with a notion of weak equivalence on their respective objects and corresponding equivalence classes, so that we can write $\cX_1 = \coprod [\cX_2]$ and $\cY_1 = \coprod [\cY_2]$. Suppose both have a loose topology defined on them. Define a \textbf{blurry loose topology} by just generalizing the notion of blurry topology on $\cX_0$: $\{ [\cY_2] \rarr [\cX_2] \}$ is in $K[\cX_2]$ if $\{ \cY_2 \rarr \cX_2 \}$ is in $K(\cX_2)$. It is not difficult to see that this also defines a loose topology on $\cX_1 = \coprod [\cX_2]$.\\

Now a diagram such as:
\beq
\xymatrix{
	[\cY_1] \ar[r] &[\cX_1] \\ 
	[\cY_2] \ar@{.>}[u] \ar[r] & [\cX_2] \ar@{.>}[u] 
}  \nonumber
\eeq

where the top horizontal map is a covering map for a blurry topology $[\tau_0]$ on $\cX_0$, with $\tau_0$ a Grothendieck topology on $\cX_0$, and the bottom map is a covering map for a blurry loose topology $[\tau_1]$ on $\cX_1$, where $\tau_1$ is a loose Grothendieck topology on $\cX_1$, together define a layered, or \textbf{powered blurry topology} $[\tau_1] \circ [\tau_0]$ on $\cX_0$. This can of course be generalized iteratively. Applying this to functors of types $F: \cC^{[N,N+p]} \rarr \cD^{[N,N+p]}$, if we have towers of equivalences in $\cC^{[N,N+p]}$, this induces level-wise quotient maps $[F] = ([F_{N+p}], \cdots, [F_N])$.\\

\section{$\bZ$-schemes-parametrized model categories}
\subsection{Parametrizations of model categories by $\bZ$-schemes}
In this section we consider full, essentially surjective functors $\xi: \PreSh(\ZSmk) \rarr \cM$ where $\cM$ is any model category. For the sake of having a good notion of space parametrizing morphisms of $\cM$, we consider functors of the form $\Sh(\ZSmk, \Nis) \rarr \cM$ instead. Morphisms of $\ZSmk$, which are elements of $\ZSmk$ themselves, map to morphisms of $\cM$. In this manner we have morphisms of $\cM$ being parametrized by $\bZ$-schemes. We now suppose we have a notion of equivalence on $\bZ$-schemes. In the next subsection we define one example of equivalence relation on such objects. Having such a notion of equivalence on $\bZ$-schemes, on which we also have a Nisnevich topology, produces a blurry Nisnevich topology $[\Nis]$.\\

Recall that a Nisnevich covering on $\ZSmk$ is a finite family of etale morphisms $\{ \fa: \Ua \rarr X \}$ in $\ZSmk$, such that for all $x \in X$, there is a $\alpha$, there is some $u \in \Ua$ with $\fa(u) = x$ and $k(u) \cong k(x)$. We generalize this notion to that of a blurry Nisnevich topology, whose coverings are given by finite families of morphisms in $\ZSmk$, $\{ [\fa]: [\Ua] \rarr [X] \}$, such that for any $x \in X \in [X]$, there is some $\alpha$, there is some $u \in \Ua \in [\Ua]$ with $\fa(u) = x$, and $k(u) \cong k(x)$, $\fa$ etale morphism in $\ZSmk$. Let $\Sh(\ZSmk, [\Nis])$ be the category of sheaves on $\ZSmk$ for that topology. If we have an object $F \in \cM[\Sh(\ZSmk, \Nis)] = \Funfes(\Sh(\ZSmk, \Nis), \cM)$, we have an induced morphism $[F]: \Sh(\ZSmk, [\Nis]) \rarr \cM/\!\!\sim$. This is the first stage with $\tau_0 = \Nis$. Now one could stop there or use powered topologies. If that is the case, we categorify each object $X = \sum m_i [X_i]$ of $\ZSmk$, defining it as a category with objects $[X_i]$, with morphisms those morphisms in $\Smk$. $\bZ$-schemes $X$ are of type $\Lambda_1$. Now covering families on $X$ are finite families of etale morphisms $\{ f_{ij}: Y_j \rarr X_i \}$ such that for any $x_i \in X_i$, there is some $y_j \in Y_j$ such that $f_{ij}(y_j) = x_i$ and $k(y_j) \cong k(x_i)$, where those $Y_j's$ originate from some $Y = \sum n_j[Y_j]$, with a preexisting morphism $Y \rarr X$ in $\ZSmk$. Here in the covering family we have not used brackets for objects of $\SmCor(k)$ to avoid confusion with equivalence classes of schemes. This gives us a loose topology $\tau_1$ on $X$. We can consider the associated blurry loose topology $[\tau_1]$, resulting in a powered blurry topology $[\tau_1] \circ [\tau_0]$ on $\ZSmk$. Moving forward we can further categorify each $X_i$, smooth over $k$, so of finite type, hence it can be covered by finitely many affine schemes $(\Spec R_{ik}, \cO_{ik})$, objects, with morphisms morphisms of affine schemes. If we have a morphism from $Y_j$ to $X_i$ we do likewise for $Y_j$, covered by $(\Spec S_{jl}, \cO_{jl})$, and coverings are finite families of etale morphisms of ringed spaces $\Spec S_{jl} \rarr \Spec R_{ik}$, giving us a loose topology $\tau_2$, with an associated blurry loose topology $[\tau_2]$, yielding a powered blurry topology $[\tau_2] \circ [\tau_1] \circ [\tau_0]$ on $\ZSmk$. We can pursue in this manner as many times as needed, provided subsequent topologies can be defined. This gives rise to $\Sh(\ZSmk, [\tau_2] \circ [\tau_1] \circ [\tau_0])$. If we have a functor $F: \Sh(\ZSmk, \tau_2 \circ \tau_1 \circ \tau_0) \rarr \cM$, this induces $[F] = ([F_2], [F_1], [F_0]): \Sh(\ZSmk, [\tau_2] \circ [\tau_1] \circ [\tau_0]) \rarr \cM/\!\!\sim$. Another alternative consists in not having a notion of equivalence on $\ZSmk$, but to have an interval object $I$ instead on the site $(\ZSmk, \text{Nis})$, such as the affine line $\bA^1$, and this is the point of view adopted in \cite{Ka}. What is studied in \cite{Ka} is the topos $\cM^{(\Smk, \text{Nis}, \bA^1)}$, for $\cM$ a left proper, combinatorial simplicial model category. In the present paper we put no restriction on our model categories $\cM$ for the simple reason that we do not take a Bousfield localization of our topos $\Fun(\Sh(\ZSmk, \Nis), \cM)$. Nevertheless we will come back later to a generalization of the work done in \cite{Ka} to contrast this with using equivalences on schemes.\\

\subsection{Equivalence relations on $\ZSmk$}
For our notion of equivalence, we will use Hochschild cohomology on $\ZSmk$, which we will define as a generalization of the usual Hochschild cohomology of schemes as developed in \cite{GS} and \cite{S} in particular, but where some relevant treatments can also be found in \cite{K}, \cite{Ku}. The idea of using Hochschild cohomology is derived from the fact that since one has functors from $\PreSh(\ZSmk)$ to $\cM$, one would want equivalent $\bZ$-schemes to map to the same object. If we regard functors as representations, one would think in algebraic terms about Morita equivalent algebras, which is trivial for commutative rings. From Morita theory one can easily think of Hochschild cohomology. The latter is not trivial on $\Smk$ however. Recall, from \cite{GS} and \cite{S}, that for $X$ a separated scheme of finite type over $k$, $\cF$ a sheaf of $\cO_X$-modules, one can define the Hochschild cohomology of $X$ with coefficients in $\cF$ by:
\beq
\xH^n(\cO_X, \cF) = \Ext^n_{\OXX}(\cO_X, \cF) \nonumber
\eeq
where $\cF$ is regarded as a sheaf of $\OXX$-modules via the diagonal functor. Define the Hochschild cohomology of a scheme $X$ as $\xH^n(X) = \xH^n(\cO_X, \cO_X)$, and we define two schemes $X$ and $Y$ to be equivalent if $\Hdot(X) \cong \Hdot(Y)$. We now generalize this $\bZ$-schemes. Let $X = \sum m_i [X_i]$ and $Y = \sum n_j[Y_j]$. To be explicit, in $\Smk$ we have $\Hdot(X) = \Extdot_{\OXX}(\Delta_* \cO_X, \Delta_* \cO_X)$. We have $X \times  X = \sum m_i X_i \times X_i$, so that $\OXX = \otimes \cO_{m_i X_i \times X_i} = \otimes m_i \cO_{X_i \times X_i}$, and it also follows $\Delta_*( \cO_X) = \Delta_*( \otimes m_i \cO_{X_i}) = \otimes m_i \Delta_*( \cO_{X_i})$. Now:
\begin{align}
	\Hdot(X) &= \Extdot_{\OXX}( \Delta_* \cO_X, \Delta_* \cO_X) \nonumber \\
	&= \Extdot_{ \otimes m_i \cO_{X_i \times X_i}} ( \otimes m_i \Delta_* \cO_{X_i}, \otimes m_i \Delta_* \cO_{X_i}) \nonumber \\
	&= \otimes m_i \Extdot_{\cO_{X_i \times X_i}} ( \Delta_* \cO_{X_i}, \Delta_* \cO_{X_i}) = \otimes m_i \Hdot(X_i) \nonumber
\end{align}
thus we can define two objects $ X = \sum_{i \in I} m_i [X_i]$ and $Y = \sum_{j \in J} n_j [Y_j]$ of $\ZSmk$ to be equivalent if the indexing sets $I = J$, $m_i = n_i$ for all $i \in I$, and $\Hdot (X_i) \cong \Hdot (Y_i)$ for all $i \in I$. This then defines a notion of Hochschild equivalence on $\ZSmk$.\\

Another definition of Hochschild cohomology of $\bZ$-schemes we can use is the Grothendieck-Loday definition of such, as presented in \cite{S} for $\Smk$. Recall that if $A$ is an algebra over a field $k$, letting $A^e = A \otimes_k A$, we can define the bar complex by $\Bdot(A) = A \otimes_k A^{\otimes \ctrdot} \otimes_k A$. Then we define $\Cdot(A) = A \otimes_{A^e} \Bdot(A) = A \otimes_k A^{\otimes \ctrdot}$. If $X$ is a smooth scheme over $k$, we define a presheaf on $X$ by letting $\cCdot(U) = \Cdot(\Gamma(U, \cO_X))$. We denote by $a \cCdot$ the associated sheaf, where $a$ is the sheafification functor. It is a sheaf of $\cO_X$-modules. Now if $\cFdot$ is a chain complex of sheaves of $\cO_X$-modules, if $\cG$ is a $\cO_X$-module with an injective resolution $0 \rarr \cG \rarr \cIdot$, then we define the hyperext by:
\beq
\bExt^n_{\cO_X}( \cFdot, \cG) = \xH^n( \Hom_{\cO_X}(\cFdot, \cIdot)) \nonumber 
\eeq
In $\ZSmk$, $\cIdot(\otimes m_i \cG_i) = \otimes m_i \cIdot \cG_i$.
With this in hand we can define the Grothendieck-Loday type definition of Hochschild cohomology of schemes with values in $\cF$ by $\HHdot(X, \cF) = \bExtdot_{\cO_X} (a \cCdot, \cF)$, and the Hochschild cohomology of schemes by $\HHdot(X) = \HHdot(X, \cO_X) = \bExtdot_{\cO_X}( a \cCdot, \cO_X)$. As usual, we say $X \sim Y$ in $\Smk$ if and only if $ \HHdot(X) \cong \HHdot(Y)$. We now generalize this definition to $\ZSmk$. First $\Gamma(-,\cO_X) = \otimes \Gamma_i (-, \cO_{m_iX_i}) = \otimes m_i \Gamma_i(-, \cO_{X_i})$, where $\Gamma_i$ is the section functor on $X_i$. Since $\cCdot(U) = \Cdot(\Gamma(U, \cO_X))$ defines a presheaf on $X$, it follows $\cC_{\ctrdot, mX} = \Cdot(\Gamma(U, \cO_{mX})) = \Cdot(m \Gamma(U, \cO_X))$ is also equal to $m \cC_{\ctrdot, X}$, from which it follows that if $X = \sum m_i [X_i] \in \ZSmk$:
\begin{align}
	\HHdot(X) &= \Hdot(\Hom_{\otimes m_i \cOXi}( a C_{\ctrdot}(\otimes m_i \Gamma_i(-, \cOXi)), \otimes m_i \cIdot \cOXi)) \nonumber \\
	&=\Hdot(\Hom_{\otimes m_i \cOXi}(a (\otimes m_i C_{\ctrdot} \Gamma_i(-, \cOXi)), \otimes m_i \cIdot \cOXi)) \nonumber \\
	&=\Hdot(\Hom_{\otimes m_i \cOXi}( \otimes m_i a  C_{\ctrdot} \Gamma_i(-, \cOXi), \otimes m_i \cIdot \cOXi)) \nonumber \\
	&= \Hdot( \otimes \Hom_{m_i \cOXi}(m_i a C_{\ctrdot} \Gamma_i(-,\cOXi), m_i \cIdot \cOXi)) \nonumber \\
	&= \Hdot( \otimes m_i \Hom_{\cOXi}(a C_{\ctrdot} \Gamma_i(-,\cOXi), \cIdot \cOXi)) \nonumber \\
	&= \otimes m_i \Hdot(\Hom_{\cOXi}(a C_{\ctrdot} \Gamma_i(-,\cOXi), \cIdot \cOXi)) \nonumber \\
	&= \otimes m_i \HHdot(X_i) \nonumber
\end{align}
hence we define, again, $X = \sum_{i \in I} m_i [X_i]$ and $Y = \sum_{j \in J} n_j [Y_j]$ in $\ZSmk$ to be equivalent if $I = J$, $m_i = n_i$ for all $i \in I$, and $\HHdot(X_i) \cong \HHdot(Y_i)$ for all $i \in I$.

\subsection{$\bA^1$-homotopy category of $\bZ$-schemes as parameter space}
As pointed out above, an alternative to using a notion of equivalence on $\bZ$-schemes consists in using an interval object $I$ on $(\ZSmk, \Nis)$. Naturally one would take $I = \bA^1$, as done for the homotopy theory of schemes (\cite{VM}), developed from presheaves of simplicial sets. We will use a variant of such a construction, not regarding $\bA^1$ as an interval object, but just as a presheaf, we will localize with respect to $\bA^1$-local maps, and then use the Nisnevich topology on such a localization. The construction is fairly transparent.\\

Recall that in this work, we consider presheaves of sets. In this section in particular, we consider objects of $\PreSh(\ZSmk, \Nis, \bA^1)$. In $\bA^1$-homotopy theory of schemes however, we work with simplicial sheaves. Thus we regard presheaves of sets as constant simplicial presheaves. Consider $\Hom_{\ZSmk}(-,\bA^1)$, the representable presheaf associated with $\bA^1$, that we still denote by $\bA^1$. Consider the functor category $\Fun((\ZSmk)^{\op}, \SetD)$. From there we essentially follow \cite{Hi}. Recall that if $\cM$ is a model category, $S$ is a class of maps in $\cM$, we can define a model category structure on the underlying category of $\cM$, denoted $L_S \cM$, for which weak equivalences are $S$-local equivalences in $\cM$, cofibrations are those of $\cM$, and fibrations are those maps that have the right lifting property with respect to cofibrations that are also $S$-local equivalences. Recall also what those are: an object $W$ of $\cM$ is said to be $S$-local if it is fibrant, and if for any $f:A \rarr B$ in $S$, the induced map of homotopy function complexes $f^*: \map(B,W) \rarr \map(A,W)$ is a weak equivalence of simplicial sets. A map $g:X \rarr Y$ in $\cM$ is a $S$-local equivalence if for any $S$-local object $W$, the induced map of homotopy function complexes $g^*: \map(Y,W) \rarr \map(X,W)$ is a weak equivalence of simplicial sets. It is a fact that if $\cM$ is a left proper, cellular model category, $S$ a set of maps in $\cM$, then the left Bousfield localization $L_S \cM$ of $\cM$ exists. Now $\SetD$ is a left proper cellular model category, $\ZSmk$ is a small category, hence $\SetD^{\ZSmk^{\op}}$ is also left proper cellular (\cite{Hi}). Denote it by $\cM$, let $S$ be the set of projection maps $\{ F \times  \bA^1 \rarr F \}$ for $F \in \cM$. An object $G$ of $\cM$ is $S$-local, or $\bA^1$-local, if it is fibrant, and for any $p: F \times \bA^1 \rarr F$ in $S$, the induced map of homotopy function complexes $p^*: \map(F,G) \rarr \map(F \times \bA^1, G)$ is a weak equivalence in $\SetD$. Then $\alpha: F \Rightarrow H$ is an $\bA^1$-local equivalence if for any $\bA^1$-local object $G$, the induced map $\alpha^*: \map(H,G) \rarr \map(F,G)$ is a weak equivalence in $\SetD$. We consider $L_{\bA^1} \SetD^{(\ZSmk)^{\op}}$, the left Bousfied localization of $\SetD^{\ZSmk^{\op}}$ with respect to $S = \{ F \times \bA^1 \rarr F \}$, then use the Nisnevich topology on $\ZSmk$, giving rise to $\Xi = L_{\bA^1} \SetD^{(\ZSmk^{\op}, \Nis)}$, and hence to $\Lambda = \Sh_{\bA^1}(\ZSmk, \Nis) \subset \Xi$, and finally functors $\Lambda \rarr \cM$ for $\cM$ any model category can be regarded as providing parametrizations of $\cM$ by $\bA^1$-homotopic $\bZ$-schemes, where now we consider $\bA^1$-local objects instead of Hochschild equivalent $\bZ$-schemes.

\end{document}